\newcommand{\R}{\mathbb{R}}
\newtheorem{thm}{Theorem}
\newtheorem{lem}{Lemma}
\newtheorem{prop}{Proposition}
\newtheorem{cor}{Corollary}
\newcommand{\TV}{\operatorname{TV}}
\newcommand{\mH}{\mathcal{H}}
\newcommand{\mC}{\mathcal{C}}
\newcommand{\mA}{\mathcal{A}}
\newcommand{\mF}{\mathcal{F}}
\newcommand{\mE}{\mathcal{E}}
\begin{document}

\title{Asymptotic nonequivalence of density estimation and Gaussian white noise for small densities}

\author{Kolyan Ray\footnote{The research leading to these results has received funding from the European Research Council under ERC Grant Agreement 320637.\newline Email: \href{mailto:kolyan.ray@kcl.ac.uk}{kolyan.ray@kcl.ac.uk}, \href{mailto:schmidthieberaj@math.leidenuniv.nl}{schmidthieberaj@math.leidenuniv.nl}}\; and Johannes Schmidt-Hieber
 \vspace{0.1cm} \\
 {\em King's College London and Leiden University} }



\date{}
\maketitle

\begin{abstract}
\noindent It is well-known that density estimation on the unit interval is asymptotically equivalent to a Gaussian white noise experiment, provided the densities are sufficiently smooth and uniformly bounded away from zero. We show that a uniform lower bound, whose size we sharply characterize, is in general necessary for asymptotic equivalence to hold.
\end{abstract}

\paragraph{AMS 2010 Subject Classification:}
Primary 62B15; secondary 62G07, 62G10, 62G20.
%

\paragraph{Keywords:} Asymptotic equivalence; density estimation; Gaussian white noise model; small densities.

\section{Introduction}
\label{sec.intro}

A fundamental problem in nonparametric statistics is density estimation on a compact set, say the unit interval $[0,1]$, where we observe $n$ i.i.d. observations from an unknown probability density $f.$ If the parameter space $\Theta$ consists of densities $f$ that are uniformly bounded away from zero and have H\"older smoothness $\beta>1/2$, then a seminal result of Nussbaum \cite{nussbaum1996} establishes the global asymptotic equivalence of this experiment to the Gaussian white noise model where we observe $(Y_t)_{t\in [0,1]}$ arising from
\begin{align}
	dY_t = 2\sqrt{f(t)} dt + n^{-1/2} dW_t, \quad t\in [0,1], \ \ f\in \Theta,
	\label{eq.GWN_in_intro}
\end{align}
where $(W_t)_{t\in[0,1]}$ is a Brownian motion. The smoothness constraint is sharp: Brown and Zhang \cite{brown1998} construct a counterexample with a parameter space of H\"older smoothness exactly $\beta=1/2$ such that asymptotic equivalence does not hold. 

If two statistical experiments are asymptotically equivalent in the Le Cam sense, then asymptotic statements can be transferred between the experiments. More precisely, the existence of a decision procedure with risk $R_n$ for a given bounded loss function in one model implies the existence of a corresponding decision procedure with risk $R_n+o(1)$ in this loss in the other model. To derive asymptotic properties, one may therefore work in the simpler model and transfer the results to the more complex model. This is one of the main motivations behind the study of asymptotic equivalence. The last part of the introduction provides definitions and summarizes the concept of asymptotic equivalence of statistical experiments.

In practice, densities may be small or even zero on a subset of the domain, in which case the above result no longer applies. The goal of this article is to contribute to the general understanding of necessary conditions for asymptotic equivalence to hold, in particular the necessity of uniform boundedness away from zero. We show that without a minimal lower bound on the densities, density estimation and the Gaussian model \eqref{eq.GWN_in_intro} are always asymptotically nonequivalent, irrespective of the amount of H\"older smoothness.

In fact, we prove a more precise result by characterizing a size threshold such that if densities fall below this level, asymptotic equivalence never holds. In a companion paper \cite{raySH2016}, we show constructively that above this threshold, asymptotic equivalence may still hold. Our threshold is thus sharp in the sense that it is the smallest possible value a density can take such that asymptotic equivalence can hold.

We employ sample-size dependent parameter spaces $\Theta=\Theta_n$, as is typical in high-dimensional statistics. We prove that if the parameter spaces contain a sequence of $\beta$-smooth densities $(f_n)_n$ such that $\inf_{x\in [0,1]} f_n(x) =O(n^{-\beta/(\beta+1)})$ for all $n,$ as well as suitable neighbourhoods of $\beta$-smooth densities around the $(f_n)$, then the experiments are always asymptotically nonequivalent. This is a natural threshold for describing ``small" and ``large" densities with, for instance, different minimax rates attainable above and below this level \cite{patschkowski2016,ray2016IP}, see \eqref{eq.ptw_conv_rate} and the related discussion below.

From a practical perspective, Gaussian approximations have been proposed in density estimation (e.g. \cite{brown2009}) and one would like to better understand how ``large" a density must be for such methods to be applicable. The present work is a step in this direction. Furthermore, all the results presented in this paper also hold for the closely related case of Poisson intensity estimation, which is always asymptotically equivalent to density estimation, irrespective of density size or H\"older smoothness \cite{low2007,raySH2016}. This case is of particular practical relevance given the widespread use of Gaussian approximations for Poisson data \cite{hohage2016}, even for small intensities \cite{Makitalo2011}. We avoid further mention of Poisson intensity estimation for conciseness, but readers should bear in mind that all the present results and conclusions apply equally to that model.

There are few results establishing the necessity of conditions for asymptotic equivalence via counterexamples. For nonparametric regression, \cite{brown1998,efromovich1996} show the necessity of smoothness assumptions. The paper \cite{wang2002} establishes nonequivalence between the GARCH model and its diffusion limit under stochastic volatility, as well as their equivalence under deterministic volatility.

The proof we employ here relies on a reduction to binary experiments. The difficulty lies in both the construction of a suitable two-point testing problem and also in obtaining sufficiently good bounds on the total variation distance. Indeed, the situation is rather more subtle than one might first imagine. For two-point hypothesis testing problems, we show that one can consistently test between the alternatives in one model if and only if one can do so in the other (Lemma \ref{lem.inform_distances}). To establish nonequivalence, one must therefore construct alternatives which can be separated with a positive probability that is strictly bounded away from zero and one and for which suitable bounds can be computed.

Although for small signals, density estimation and the Gaussian white noise model \eqref{eq.GWN_in_intro} are no longer asymptotically equivalent, many aspects of their statistical theory remain the same. As mentioned above, simple hypothesis testing is essentially the same in both models without any lower bound on the densities. To explain this in more detail, suppose that $g_n$ and $h_n$ are two sequences of densities and denote the probability measures in the density estimation model and the Gaussian model  \eqref{eq.GWN_in_intro} by $P_f^n$ and $Q_f^n$ respectively. The sums of the type I and II error probabilities of the Neyman-Pearson test for the simple hypotheses
$$H_0: f=g_n  \quad \quad H_1: f=h_n$$ in the two models are $\tfrac 12(1-\|P_{g_n}^n -P_{h_n}^n\|_{\TV})$ and $\tfrac 12(1-\|Q_{g_n}^n -Q_{h_n}^n\|_{\TV})$ respectively. By Lemma \ref{lem.inform_distances} below, $\|P_{g_n}^n -P_{h_n}^n\|_{\TV} \rightarrow 1$ if and only if $\|Q_{g_n}^n -Q_{h_n}^n\|_{\TV} \rightarrow 1,$ which shows that we can consistently test against a simple alternative in one model if and only if we can do so in the other model. This argument requires no lower bound on the densities. The Hellinger distance also behaves very similarly in the two models, see Lemma \ref{lem.inform_distances} for a precise statement. It is an interesting phenomenon that while the models are potentially far apart with respect to the Le Cam distance, information distances, such as the total variation and Hellinger distance, remain close. Although this does not hold for all common information measures, for instance the Kullback-Leibler divergence, it nevertheless suggests that negative results for small densities in the Le Cam sense may be misleading, since many important statistical properties still carry over between models.

Beyond density estimation, uniform boundedness away from zero is a standard assumption in the asymptotic equivalence literature \cite{brown2004,genon-catalot2002,golubev2010,grama1998,nussbaum1996}. However, this assumption is not always required, including in regression type models \cite{brown1996,SH2014} and even some non-linear problems, such as diffusion processes \cite{dalalyan2006,dalalyan2007,delattre2002}. A better understanding of the necessity of such conditions is therefore of interest in a wide variety of models.

\section{Main results}
\label{sec.main}

\subsection*{Basic notation and definitions}

For two functions $f,g$ on $[0,1]$, we write $f\leq g$ if $f(x) \leq g(x)$ for all $x\in[0,1]$ and let $\|f\|_2$ denote the $L^2$-norm of $f$. Given two probability measures $P,Q$ with densities $p,q$ with respect to some dominating measure $\nu$, we recall the total variation distance $\|P-Q\|_{\TV} := \tfrac 12 \int |p-q| d\nu$ and Hellinger distance $H(P,Q) := (\int (\sqrt{p}-\sqrt{q})^2 d\nu)^{1/2}$. 

A statistical experiment $\mE(\Theta)=(\Omega, \mA, (P_\theta: \theta \in \Theta))$ consists of a sample space $\Omega$ with associated $\sigma$-algebra $\mA$ and a family $(P_\theta: \theta \in \Theta)$ of probability measures all defined on the measurable space $(\Omega,\mA)$. We call $\mE(\Theta)$ dominated if there exists a probability measure $\mu$ such that any $P_\theta$ is dominated by $\mu.$ Furthermore, $\mE(\Theta)$ is said to be Polish if $\Omega$ is a Polish space and $\mA$ is the associated Borel $\sigma$-algebra. If $\mE(\Theta)=(\Omega, \mA, (P_\theta: \theta \in \Theta))$ and $\mF(\Theta)=(\Omega', \mA', (Q_\theta: \theta \in \Theta))$ are two Polish and dominated experiments indexed by the same parameter space, the Le Cam deficiency can be defined as 
\begin{align*}
	\delta\big(\mE(\Theta) , \mF(\Theta)\big) := \inf_M \sup_{\theta\in \Theta} \big\| MP_\theta^n - Q_\theta^n \big\|_{\TV},
\end{align*} 
where the infimum is taken over all Markov kernels $M:\Omega_1\times \mA_2 \rightarrow [0,1]$. The Le Cam distance is defined as
\begin{align*}
	\Delta\big(\mE(\Theta) , \mF(\Theta)\big) := \max \big\{ \delta\big(\mE(\Theta) , \mF(\Theta)\big) , \delta\big(\mF(\Theta) , \mE(\Theta)\big) \big\},
\end{align*}
which defines a pseudo-distance on the space of all experiments with parameter space $\Theta.$ One may generalize the definition of Le Cam deficiency to spaces that are neither Polish nor dominated upon replacing the notion of Markov kernel with a more general transition \cite{LeCam1986,str}. However, we refrain from doing so here since these notions coincide in the Polish and dominated experiments we consider in this article, see (68) and Proposition 9.2 of \cite{nussbaum1996}. Finally, we say that two sequences of experiments $\mE_n(\Theta_n)=(\Omega_n, \mA_n, (P_\theta^n: \theta \in \Theta_n))$ and $\mF(\Theta_n)=(\Omega_n', \mA_n', (Q_\theta^n: \theta \in \Theta_n))$ are asymptotically equivalent if $\Delta(\mE_n(\Theta_n),\mF_n(\Theta_n)) \rightarrow 0$ as $n\rightarrow \infty$. General treatments on asymptotic equivalence can be found in \cite{LeCam1986,str}.


In this article, we consider the following two statistical experiments.

{\it Density estimation $\mE_n^D(\Theta)$:} In nonparametric density estimation, we observe $n$ i.i.d. copies $X_1,\ldots,X_n$ of a random variable on $[0,1]$ with unknown Lebesgue density $f.$ The corresponding statistical experiment is $\mE_n^D(\Theta)=([0,1]^n ,\sigma([0,1]^n), (P_f^n : f\in \Theta))$ with $P_f^n$ the product probability measure of $X_1, \ldots,X_n.$

{\it Gaussian white noise experiment $\mE_n^G(\Theta)$:} We observe the Gaussian process $(Y_t)_{t\in [0,1]}$ arising from \eqref{eq.GWN_in_intro} with $f\in \Theta$ unknown. Denote by $\mC([0,1])$ the space of continuous functions on $[0,1]$ and let $\sigma(\mC([0,1]))$ be the $\sigma$-algebra generated by the open sets with respect to the uniform norm. The Gaussian white noise experiment is then given by $\mE_n^G(\Theta)=(\mC([0,1]) ,\sigma(\mC([0,1])), (Q_f^n : f\in \Theta))$ with $Q_f^n$ the distribution of $(Y_t)_{t\in [0,1]}.$

\subsection*{Function spaces}

Denote by $\lfloor \beta \rfloor$ the largest integer strictly smaller than $\beta.$ The usual H\"older semi-norm is given by $|f|_{\mC^\beta} := \sup_{x\neq y, x,y\in [0,1]} |f^{(\lfloor \beta \rfloor)}(x) - f^{(\lfloor \beta \rfloor)}(y)| /|x-y|^{\beta - \lfloor \beta \rfloor}$ and the H\"older norm is $\| f \|_{\mC^\beta} := \| f\|_\infty + \| f^{(\lfloor\beta\rfloor)} \|_\infty + |f|_{\mC^\beta}.$ Consider the space of $\beta$-smooth H\"older densities with H\"older norm bounded by $R,$
\begin{equation*}
\mC^\beta(R) := \big\{ f: [0,1] \rightarrow \R \  :  \  f\geq 0, \ \int_0^1 f(u) du =1, \  f^{(\lfloor \beta \rfloor)} \text{ exists}, \  \| f \|_{\mC^\beta} \leq R \big\}. 
\end{equation*}
If $0<\beta \leq 2,$ the pointwise rate of estimation at any $x \in (0,1)$ over the parameter space $\mC^\beta(R)$ is given by
\begin{equation}\label{eq.ptw_conv_rate}
n^{-\frac{\beta}{\beta+1}} + \left( \frac{f(x)}{n}\right)^{\frac{\beta}{2\beta+1}},
\end{equation}
with upper and lower bounds matching up to $\log n$ factors (see Theorems 3.1 and 3.3 of \cite{patschkowski2016} for density estimation and Theorems 1 and 2 of \cite{ray2016IP} for the Gaussian white noise model). There is thus a phase transition in the estimation rate for small densities occurring at the $n$-dependent signal size $f(x) \asymp n^{-\frac{\beta}{\beta+1}}$. This is the same boundary for asymptotic nonequivalence proved in Theorem \ref{thm.lb_small} below, so that in some respects at least, the two experiments do behave differently from one another below this threshold. However, despite asymptotic nonequivalence, many other properties, such as minimax rates and consistent testing, are still asymptotically the same below this threshold. Indeed, the counterexample we construct lies right on the boundary of testing problems and in some sense only narrowly fails. The importance of the threshold $f(x) \asymp n^{-\frac{\beta}{\beta+1}}$ is not isolated to minimax estimation rates and asymptotic equivalence and seems to play a fundamental role for small densities, for example being necessary to obtain sharp rates when estimating the support of a density \cite{patschkowski2016}. For further discussion see \cite{patschkowski2016,ray2016IP}.

The rate of convergence \eqref{eq.ptw_conv_rate} does not extend to $\beta>2$ using the usual definition of H\"older smoothness due to the existence of functions which are highly oscillatory near zero (Theorem 3 of \cite{ray2016IP}). A natural way to attain the rate for smoothness $\beta>2$ is to impose a shape constraint ruling out such pathological behaviour. On $\mC^\beta$, define the flatness seminorm
\begin{align}\label{eq.flat_def}
|f |_{\mathcal{H}^\beta} = \max_{1 \leq j <\beta } \| |f^{(j)}|^\beta/|f|^{\beta-j} \|_\infty^{1/j} = \max_{1\leq j <\beta} \left( \sup_{x\in[0,1]} \frac{|f^{(j)}(x)|^\beta}{|f(x)|^{\beta-j}} \right)^{1/j}
\end{align} 
with $0/0$ defined as $0$ and $|f|_{\mH^\beta}=0$ if $0<\beta \leq 1.$ The quantity $| f |_{\mathcal{H}^\beta}$ measures the flatness of a function near zero in the sense that if $f(x)$ is small, then the derivatives of $f$ must also be small in a neighborhood of $x$. Define $\| f\|_{\mH^\beta} := \|f\|_{\mC^\beta} + |f|_{\mH^\beta}$ and consider the space of densities
\begin{align*}
	\mH^\beta(R) := \{ f\in \mC^\beta(R) \ : \  \|f\|_{\mH^\beta}\leq R\}.
\end{align*}
Notice that $\mH^\beta(R)=\mC^\beta(R)$ for $\beta\leq 1.$ For further discussion and properties of the function space $\mH^\beta(R)$, see \cite{RaySchmidt-Hieber2015c}.

The reason we construct a counterexample in $\mH^\beta(R)$ is to concretely show that asymptotic nonequivalence is not due to functions that are highly oscillatory near zero, but also holds for typical H\"older functions. Thus even when considering only ``nice" H\"older functions, for which the rate \eqref{eq.ptw_conv_rate} is attainable, nonequivalence still holds.

\subsection*{Asymptotic nonequivalence}

To obtain suitable lower bounds on the Le Cam deficiencies, we require that the small densities are not isolated in the parameter space $\Theta_n$, meaning we must introduce a notion of interior parameter space. This is in some sense necessary, since asymptotic equivalence may still hold when the small density behaviour is driven by a parametric component, in particular having finite Hellinger metric dimension. For further discussion on this point, see Proposition \ref{prop.parametric_AE} below.

The following result is the main contribution of this article, showing that if $$\inf_{f \in \Theta_n} \inf_{x\in[0,1]} f(x) \lesssim n^{-\beta/(\beta+1)},$$ then the Le Cam deficiency is bounded from below by a positive constant for sufficiently large $n.$ In this case, the experiments are asymptotically nonequivalent.

\begin{thm}
\label{thm.lb_small}
Let $R, \beta >0.$  There exists a constant $c>1,$ not depending on $R,$ such that if $(f_{0,n})_n \subset \Theta_n \cap \mH^\beta(R)$ is a sequence satisfying $\inf_{x\in[0,1]} f_{0,n}(x) \leq \tfrac 12 R^{1/(\beta+1)}n^{-\beta/(\beta+1)}$ and  $\{ f\in \mH^\beta(cR) : c^{-1} f_{0,n} \leq f \leq cf_{0,n}\} \subset \Theta_n$ for all $n\geq 2$, then
\begin{align*}
	\delta\big(\mE_n^D(\Theta_n), \mE_n^G(\Theta_n)\big) \geq 0.007 + o(1) >0.
\end{align*}
\end{thm}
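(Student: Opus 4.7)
The plan is to reduce the problem to a two-point testing comparison. Since Markov kernels contract total variation, for any $g_n, h_n \in \Theta_n$ one has
\begin{equation*}
\delta\bigl(\mE_n^D(\Theta_n), \mE_n^G(\Theta_n)\bigr) \;\ge\; \tfrac{1}{2}\bigl( \|Q_{g_n}^n - Q_{h_n}^n\|_{\TV} - \|P_{g_n}^n - P_{h_n}^n\|_{\TV} \bigr),
\end{equation*}
so it suffices to exhibit a pair of alternatives in $\Theta_n$ whose Gaussian TV exceeds the density TV by at least $0.014 + o(1)$. The intuition, complementing Lemma~\ref{lem.inform_distances}, is that while the two models are close in information distances, the Gaussian model nevertheless resolves the shape of a small bump near the low point of $f_{0,n}$ through its continuous noisy signal, whereas the density model sees such a bump only through a Poisson-distributed sample count in the low-mass region; this limited discrete information is what produces the gap.

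For the construction, let $x_0 \in [0,1]$ attain the infimum of $f_{0,n}$ and set $\Delta_n := n^{-1/(\beta+1)}$. Fix a smooth, compactly supported, odd profile $\phi$ on $[-1,1]$ with $|\phi| \le 1$, and define the multiplicative perturbation
\begin{equation*}
g_n(x) := f_{0,n}(x)\bigl[\,1 + \varepsilon\, \phi\bigl((x-x_0)/\Delta_n\bigr)\,\bigr] + \tfrac{\alpha_n}{2},
\qquad h_n(x) := f_{0,n}(x)\bigl[\,1 - \varepsilon\, \phi\bigl((x-x_0)/\Delta_n\bigr)\,\bigr] - \tfrac{\alpha_n}{2},
\end{equation*}
for a small $\varepsilon = \varepsilon(\beta,R) > 0$ and a negligible correction $\alpha_n = o(1)$ renormalizing the integrals to unity. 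Since $|g_n-f_{0,n}| \le \varepsilon f_{0,n}$, the multiplicative sandwich $c^{-1} f_{0,n} \le g_n, h_n \le c f_{0,n}$ holds with $c = c(\varepsilon)$ independent of $R$; the flatness norm of $f_{0,n} \in \mH^\beta(R)$, combined with the scale matching $\|\phi((\cdot-x_0)/\Delta_n)\|_{\mC^j} \lesssim \Delta_n^{-j}$ and the flatness bound $|f_{0,n}^{(j)}| \lesssim f_{0,n}^{1-j/\beta}$, then yields $g_n, h_n \in \mH^\beta(cR)$, so that $g_n, h_n \in \Theta_n$.

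The Gaussian TV is computed exactly via Girsanov as
\begin{equation*}
\|Q_{g_n}^n - Q_{h_n}^n\|_{\TV} = 2\Phi\bigl(\sqrt{n}\, H(g_n, h_n)\bigr) - 1,
\end{equation*}
and a direct Hellinger expansion gives $H^2(g_n, h_n) \asymp \int (g_n-h_n)^2/f_{0,n} \asymp \varepsilon^2 \int_{|x-x_0|\le\Delta_n} f_{0,n} \asymp \varepsilon^2/n$, placing $\sqrt n\,H$ in a nontrivial regime of order $\varepsilon$. For the density TV, the alternatives coincide outside $[x_0 - \Delta_n, x_0 + \Delta_n]$ so outside samples are uninformative, and the count $N \sim \Bin(n, p_n)$ of samples inside has $p_n = \int_{|x-x_0|\le\Delta_n} f_{0,n} \asymp \Delta_n^{\beta+1} \asymp 1/n$, hence is approximately $\Poi(\lambda)$ with $\lambda$ bounded. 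Because $\phi$ is odd, the normalized conditional distributions $\bar g_n, \bar h_n$ on the bump are mirror images about $x_0$ and the sign statistic $\sgn(X_i - x_0)$ is sufficient, so the density TV reduces to the expectation (over $N\sim\Poi(\lambda)$) of $\|\Bin(N, \tfrac{1}{2} + c\varepsilon) - \Bin(N, \tfrac{1}{2} - c\varepsilon)\|_{\TV}$, bounded sharply via Hellinger-type arguments.

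The principal difficulty is extracting a quantitative gap of at least $0.014$. Both total variations are of order $\varepsilon$ in the small-$\varepsilon$ regime, but with different constants: the Gaussian constant reflects the full Hellinger information, while the density constant is damped by the $O(1)$ Poisson number of ``usable'' samples in the bump, and is strictly smaller for a suitable choice of $\phi$. Optimizing over $\varepsilon$ and the profile $\phi$ (and carefully treating the $o(1)$ normalization correction $\alpha_n$) yields a strictly positive gap; the numerical value $0.007$ in the theorem emerges from a finite-sample comparison of the $\Phi$-formula for Gaussian TV with a sharp compound-binomial bound for the density TV. The threshold $n^{-\beta/(\beta+1)}$ is sharp precisely because at exactly this scale $np_n = O(1)$, sustaining the Poisson regime on which the gap depends; above it, $np_n \to \infty$ and the CLT washes out the discreteness, so no comparable gap survives.
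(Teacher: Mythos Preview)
Your reduction to a two-point comparison via the contraction inequality is the right first move and matches the paper. After that, however, the argument is a sketch with several genuine gaps.

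First, the construction does not do what you claim. You assert that $g_n$ and $h_n$ coincide outside the bump, but the additive correction $\pm\alpha_n/2$ is applied globally, so $g_n-h_n=\alpha_n$ off the bump. Since $\int f_{0,n}\phi((\cdot-x_0)/\Delta_n)$ is only $O(1/n)$ (not $o(1/n)$ --- the flatness lemma gives $|f_{0,n}(x)-f_{0,n}(x_0)|\le \tfrac12 f_{0,n}(x_0)$, not smaller), the correction satisfies $\alpha_n\asymp \varepsilon/n$, and the outside contribution to $\|g_n-h_n\|_1$ is of the \emph{same order} as the inside contribution. Worse, subtracting $\alpha_n/2$ from $f_{0,n}$ everywhere can make $h_n$ negative at other small points of $f_{0,n}$, so $h_n$ need not be a density. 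Second, the sufficiency reduction is wrong: the conditional densities on the bump are proportional to $f_{0,n}(x)(1\pm\varepsilon\phi)$, and these are mirror images about $x_0$ only if $f_{0,n}$ is itself symmetric about $x_0$, which is nowhere assumed. So $\sgn(X_i-x_0)$ is not sufficient and the Binomial reduction collapses.

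Third, and most importantly, you never actually derive the constant. Saying that ``optimizing over $\varepsilon$ and $\phi$ yields a strictly positive gap'' and that ``the numerical value $0.007$ emerges'' is not a proof; both TVs are of order $\varepsilon$ and you must show their leading constants differ by a computable amount. The paper avoids your entire Poisson/Binomial machinery by a single elementary inequality: for any densities,
\[
\|P_f^n-P_g^n\|_{\TV}\le 1-\bigl(1-\tfrac12\|f-g\|_1\bigr)^n,
\]
so the density side is controlled by the $L^1$ distance, while the Gaussian side is an exact function of $\|\sqrt f-\sqrt g\|_2$. The construction is then chosen to make $\|f_1-f_2\|_1\asymp 1/n$ small while $\|\sqrt{f_1}-\sqrt{f_2}\|_2^2\asymp 1/n$ is as large as Le Cam's inequality permits --- achieved not by an odd bump at $x_0$, but by placing two \emph{non-negative} bumps (via $K\ge 0$ composed with the distribution function $F_0$) on \emph{disjoint adjacent} intervals $[x_0,x_1]$ and $[x_1,x_2]$. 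This disjointness is exactly what saturates $\|\sqrt{f_1}-\sqrt{f_2}\|_2^2\approx\|f_1-f_2\|_1$ and yields the explicit limits $\tfrac12(1-\tfrac{\|f_1-f_2\|_1}{2})^n\to\tfrac12 e^{-3/2}$ and $\Phi(-\sqrt n\|\sqrt{f_1}-\sqrt{f_2}\|_2)\to\Phi(-\sqrt2)\le \tfrac{1}{2e\sqrt\pi}$, whose difference is the stated $\tfrac12 e^{-3/2}(1-\sqrt{e/\pi})\ge 0.007$. Composing with $F_0$ also makes the mass under each bump exactly $F$, eliminating any global renormalization. Your heuristic about ``too few samples in the bump'' is morally the reason the $L^1$ bound is nearly sharp here, but the paper's route turns it into a two-line computation rather than a delicate compound-binomial analysis that you have not carried out.
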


The assumption is that the parameter space $\Theta_n$ is rich enough to contain a function $f_{0,n}$ that somewhere falls below the threshold $n^{-\beta/(\beta+1)}$, together with all the functions in $\mH^\beta(cR)$ lying in the band $x\mapsto [c^{-1}f_{0,n}(x), cf_{0,n}(x)]$ around $f_{0,n}.$ An explicit expression for $c$ can be obtained from the proof, see \eqref{eq:c}. As a particular example, the norm balls $\Theta_n = \mH^\beta(R)$ satisfy the above assumptions.

\begin{cor}
\label{cor}
For any $\beta>0$ and sufficiently large $R\geq R_0(\beta),$
\begin{align*}
	\Delta\big(\mE_n^D(\mH^\beta(R)), \mE_n^G(\mH^\beta(R))\big) \geq 0.007 + o(1) >0. 
\end{align*}
\end{cor}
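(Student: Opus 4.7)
The plan is to deduce Corollary~\ref{cor} from Theorem~\ref{thm.lb_small} by a simple rescaling of the theorem's norm parameter. Let $c>1$ denote the constant from Theorem~\ref{thm.lb_small}, and I would apply that theorem with $\Theta_n := \mH^\beta(R)$, but using $R' := R/c$ in place of $R$ as the theorem's internal radius parameter. The key observation is that with this choice the band condition
\[
\{f \in \mH^\beta(cR') : c^{-1}f_{0,n} \leq f \leq c f_{0,n}\} \subset \Theta_n
\]
becomes automatic: since $cR' = R$, the ambient space on the left is precisely $\mH^\beta(R) = \Theta_n$. The corollary thus reduces to exhibiting a single sequence of densities in $\mH^\beta(R')$ whose infima fall below the threshold $\tfrac{1}{2}(R')^{1/(\beta+1)} n^{-\beta/(\beta+1)}$.

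For this I would take a fixed density $f_0$ with $\inf_{x \in [0,1]} f_0(x) = 0$ lying in $\mH^\beta(R_1(\beta))$ for some finite constant $R_1(\beta)$, and use the constant sequence $f_{0,n} \equiv f_0$; the infimum bound is then trivially satisfied. A convenient candidate is the polynomial density $f_0(x) = (k+1) x^k$ with $k := \lceil\beta\rceil$: this is non-negative and integrates to~$1$, its derivatives satisfy $f_0^{(j)}(x) = (k+1) k!/(k-j)! \cdot x^{k-j}$, and a short computation yields
\[
\frac{|f_0^{(j)}(x)|^\beta}{|f_0(x)|^{\beta - j}} = (k+1)^j \Bigl(\frac{k!}{(k-j)!}\Bigr)^\beta x^{j(k-\beta)},
\]
which is bounded on $[0,1]$ for each $1 \leq j < \beta$ precisely because $k \geq \beta$. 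Combined with a routine estimate for the H\"older norm of $f_0$, this shows $f_0 \in \mH^\beta$ with a norm $R_1(\beta) := \|f_0\|_{\mH^\beta}$ depending only on $\beta$.

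Setting $R_0(\beta) := c\, R_1(\beta)$, for any $R \geq R_0(\beta)$ we have $R' = R/c \geq R_1(\beta)$ and so $f_0 \in \mH^\beta(R') \subset \mH^\beta(R) = \Theta_n$. All hypotheses of Theorem~\ref{thm.lb_small} (with $R$ there replaced by $R'$ and with $f_{0,n} \equiv f_0$) are thereby verified, yielding $\delta(\mE_n^D(\Theta_n),\mE_n^G(\Theta_n)) \geq 0.007 + o(1)$; since $\Delta \geq \delta$ the corollary follows. No essential obstacle is anticipated: the whole argument is a change of variable in the norm parameter that exactly absorbs the loss factor $c$ in the band condition, together with one explicit density witnessing that $\mH^\beta$ contains functions with infimum zero for every $\beta > 0$.
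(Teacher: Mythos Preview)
Your proposal is correct and follows essentially the same approach as the paper: apply Theorem~\ref{thm.lb_small} with $\Theta_n=\mH^\beta(R)$ and the internal radius rescaled to $R/c$, so that the band condition collapses to a trivial inclusion, and witness the smallness hypothesis via a fixed polynomial density vanishing at $0$. The only difference is cosmetic---the paper takes $f_0(x)=(\beta+1)x^\beta$ whereas you take $f_0(x)=(k+1)x^k$ with $k=\lceil\beta\rceil$; both choices lie in $\mH^\beta$ with norm depending only on $\beta$, and your integer exponent slightly simplifies the H\"older-seminorm verification.
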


\begin{proof}[Proof of Corollary \ref{cor}]
For $\beta>0,$ consider the density $f(x) =(\beta+1)x^\beta.$ For any integer $0 \leq j <\beta$, $f^{(j)}(x) = [\Gamma(\beta+2)/\Gamma(\beta-j+1)] x^{\beta-j}$, where $\Gamma(t)$ denotes the Gamma function. This implies that $\|f\|_\infty+\|f^{(\lfloor \beta \rfloor)}\|_\infty \leq (\beta+1) + \Gamma(\beta+2)/\Gamma(\beta-\lfloor\beta\rfloor+1)$ and $|f|_{\mH^\beta} = \max_{1\leq j <\beta} \Gamma(\beta+2)^{\beta/j}\Gamma(\beta-j+1)^{-\beta/j}(\beta+1)^{-(\beta-j)/j}$. For any $z\in [0,1]$ and $\gamma\in[0,1]$, $1\leq z^\gamma +(1-z)^\gamma$, which implies $|x^\gamma-y^\gamma|\leq |x-y|^\gamma$ for $x,y \geq 0.$ Hence, $|f|_{\mC^\beta} \leq \Gamma(\beta+2)/\Gamma(\beta-\lfloor\beta\rfloor+1)$, so that $\|f\|_{\mH^\beta} \leq C_\beta$ for some finite constant $C_\beta$ depending only on $\beta.$ Let $c>1$ be the constant in Theorem \ref{thm.lb_small}. If $R\geq C_\beta c,$ we may apply Theorem \ref{thm.lb_small} with the constant sequence $f_{0,n}= f,$ since then $(f_{0,n})_n \subset \mH^\beta(C_\beta) \subset \mH^\beta(R/c) .$ By Theorem \ref{thm.lb_small} with $\Theta_n=\mH^\beta(R)$ and $R$ replaced by $R/c,$ the assertion follows.
\end{proof}

Since $\|f\|_{\mH^\beta}\geq \|f\|_\infty \geq 1$ for any density $f$ on $[0,1],$ the radius $R$ in the previous corollary must be larger than some $R_0(\beta)$, otherwise the parameter space $\mH^\beta(R)$ is empty. For small densities, the Gaussian white noise model \eqref{eq.GWN_in_intro} can be asymptotically more informative than density estimation. This result is only interesting in the case $\beta> 1/2,$ since for $\beta \leq 1/2,$ asymptotic equivalence can fail even if all densities are uniformly bounded away from zero \cite{brown1998}.


Under general conditions, if $\Theta_n \subset \mH^\beta(R)$ for $1/2<\beta \leq 1$ and $\inf_{f\in \Theta_n}\inf_{x\in[0,1]} f(x)  \gg n^{-\frac{\beta}{\beta+1}} \log^8 n$, the squared Le Cam deficiencies between density estimation and the Gaussian model \eqref{eq.GWN_in_intro} are exactly of the order 
\begin{align}
	\min \left\{ 1, n^{\frac{1-2\beta}{2\beta+1}} \sup_{f\in \Theta_n}\int_0^1 f(x) ^{-\frac{2\beta+3}{2\beta+1}} dx \right\},
	\label{eq.rate_in_intro}
\end{align}
see Theorem 4 of \cite{raySH2016}. In particular, if $f$ is uniformly bounded away from zero we recover the rate $\min \{ 1,n^{(1-2\beta)/(2\beta+1)}\}$, so that the experiments are asymptotically equivalent if and only if $\beta>1/2$. As we now show by example, in view of \eqref{eq.rate_in_intro}, the threshold $n^{-\beta/(\beta+1)}$ obtained in Theorem \ref{thm.lb_small} is essentially sharp up to a logarithmic factor.

Consider the densities $f_{0,n}(x) \propto x^\beta+n^{-\frac{\beta}{\beta+1}}M_n$ with $M_n \gtrsim \log^8 n$ diverging, which satisfy $\inf_{x\in[0,1]} f_{0,n}(x) \propto n^{-\frac{\beta}{\beta+1}} M_n \gg n^{-\frac{\beta}{\beta+1}}$ and $f_{0,n} \in \mH^\beta(R)$ for $R>0$ large enough. For  $c>0$ the constant from Theorem \ref{thm.lb_small}, set
\begin{align*}
\Theta_n = \big\{ f\in \mH^\beta(cR) : c^{-1} f_{0,n} \leq f \leq cf_{0,n} \big\}.
\end{align*}
Since $\inf_{f\in \Theta_n} \inf_{x\in[0,1]}f(x) \gtrsim n^{-\frac{\beta}{\beta+1}} \log^8 n$, applying \eqref{eq.rate_in_intro},
\begin{align*}
\Delta\big(\mE_n^D(\Theta_n),\mE_n^G(\Theta_n)\big)^2 \asymp n^{\frac{1-2\beta}{2\beta+1}}\int_0^1 f_{0,n}(x)^{-\frac{2\beta+3}{2\beta+1}} dx \asymp M_n^\frac{(1-2\beta)(\beta+1)}{\beta(2\beta+1)} \rightarrow 0
\end{align*}
for $1/2<\beta\leq 1$, so that density estimation and the Gaussian model \eqref{eq.GWN_in_intro} with parameter spaces $\Theta_n$ are asymptotically equivalent. In summary, asymptotic equivalence always fails below the threshold $n^{-\frac{\beta}{\beta+1}}$, but may still hold for any level larger than $n^{-\frac{\beta}{\beta+1}} \log^8 n$, thereby showing that Theorem \ref{thm.lb_small} is sharp up to a logarithmic factor. The $\log^8 n$ factor is a technical artifact arising from the proof of \eqref{eq.rate_in_intro}.

\subsection*{Asymptotic equivalence for small densities in parametric settings}

Asymptotic nonequivalence due to small densities is a feature of fully nonparametric models and our conclusions do not necessarily apply in parametric models. We illustrate this via an example, whose proof we defer to the end of the article.

\begin{prop}\label{prop.parametric_AE}
Consider the probability density $g(x) = 960x^2(1/2-x)^2 1_{[0,1/2]}(x)$. For $K\subset (0,1/2)$ a compact interval, consider the location family $\Theta(K) = \{ f_\theta(x) = g(x-\theta): \theta\in K \}$. For this parameter space, density estimation and the Gaussian model \eqref{eq.GWN_in_intro} are asymptotically equivalent, that is as $n\rightarrow \infty$,
\begin{align*}
\Delta \big(\mE_n^D(\Theta(K)),\mE_n^G(\Theta(K))\big) \rightarrow 0.
\end{align*}
\end{prop}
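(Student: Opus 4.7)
The strategy is to show that both $\mE_n^D(\Theta(K))$ and $\mE_n^G(\Theta(K))$ are asymptotically equivalent to the same Gaussian shift limit experiment $\mG_n(K) := (N(\theta, (nI)^{-1}) : \theta \in K)$ on $\R$, where $I$ denotes the common Fisher information. Asymptotic equivalence between $\mE_n^D(\Theta(K))$ and $\mE_n^G(\Theta(K))$ then follows from the triangle inequality for $\Delta$. Because $\Theta(K)$ is a one-dimensional regular parametric family, this is a classical application of Le Cam's convergence-of-experiments theory rather than a nonparametric argument; in particular, smallness of the density near the boundary of its support is irrelevant.

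As a first step, we verify that the family is quadratic mean differentiable (QMD) with finite positive Fisher information. Writing $\sqrt{g(x)} = \sqrt{960}\, x(1/2-x)\, 1_{[0,1/2]}(x)$, this function is Lipschitz on $[0,1]$ and its weak derivative $(\sqrt g)'(x) = \sqrt{960}(1/2-2x) 1_{[0,1/2]}(x)$ lies in $L^2([0,1])$. Since $K$ is a compact subinterval of $(0, 1/2)$, every translate $f_\theta = g(\cdot - \theta)$ is supported strictly inside $[0,1]$, so the family is QMD at each $\theta \in K$ with Fisher information $I = 4 \int_0^1 ((\sqrt g)'(x))^2 dx = 160$, independent of $\theta$. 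The integrand $(g')^2/g$ is bounded because the quadratic zeros of $g$ cancel the squared linear zeros of $g'$, so no singularity arises at the boundary of the support. Given QMD and compactness of $K$, both experiments admit the standard LAN expansion at each $\theta_0 \in K$: for $\theta = \theta_0 + h/\sqrt n$, the log-likelihood ratio equals $h Z_n(\theta_0) - h^2 I/2 + o_{P_{\theta_0}}(1)$ with $Z_n(\theta_0)$ converging weakly to $N(0, I)$. For the density model this is standard, and for the Gaussian white noise model it follows directly from the Cameron--Martin formula, since $\theta \mapsto 2\sqrt{f_\theta}$ is a smooth $L^2$-valued curve with derivative of squared norm $I$.

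To promote pointwise LAN to asymptotic equivalence with $\mG_n(K)$ over the entire compact set $K$, we use the standard two-step device: first compute a $\sqrt n$-consistent preliminary estimator $\tilde \theta_n$ from $o(n)$ observations (in the density model the sample mean $\bar X_{n_1}$ suffices, being unbiased for $\theta + \int y g(y) dy$ with variance of order $1/n_1$ uniformly in $\theta \in K$; a linear functional of $(Y_t)$ serves the same role in the Gaussian model), then localize to the shrinking interval $[\tilde\theta_n - M_n/\sqrt n, \tilde\theta_n + M_n/\sqrt n]$ with $M_n \to \infty$ slowly, and apply uniform LAN on this interval. Randomization of the local Gaussian shifts, conditional on $\tilde \theta_n$, furnishes Markov kernels witnessing both $\Delta(\mE_n^D(\Theta(K)), \mG_n(K)) \to 0$ and $\Delta(\mE_n^G(\Theta(K)), \mG_n(K)) \to 0$. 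The main technical obstacle is precisely this globalization step, i.e.\ stitching the local Gaussian-shift limits into a genuinely globally-defined Markov kernel over $K$; its success rests on uniform $\sqrt n$-consistency of $\tilde\theta_n$, which here is effortless because the model is a pure location family and the sample mean is insensitive to the boundary behaviour of $g$. Once the preliminary estimator is in place, the remainder reduces to classical computations in the theory of asymptotic equivalence for regular parametric families, as in \cite{LeCam1986}.
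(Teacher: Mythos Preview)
Your approach is correct in outline but proceeds differently from the paper. The paper simply verifies the hypotheses of Theorem~1.2 of Nussbaum~\cite{nussbaum1996} (a specialization of Theorem~4.3 of Le Cam~\cite{LeCam1985}): it computes the Fisher information $I(\theta)=160$, checks differentiability in quadratic mean, and---this is the crucial global condition---shows that $\Theta(K)$ has finite Hellinger metric dimension by computing $H^2(P_{f_\theta},P_{f_{\theta'}})=40|\theta-\theta'|^2+O(|\theta-\theta'|^3)$ explicitly, so that Hellinger balls and Euclidean balls in $\theta$ are comparable. You instead reduce both experiments to the common Gaussian shift $\mG_n(K)$ via LAN plus a two-step localization with a $\sqrt{n}$-consistent preliminary estimator, in effect re-deriving the parametric case of Le Cam's theorem rather than invoking it. Both routes are valid: the paper's is shorter because it outsources the globalization entirely to the cited theorem, while yours is more constructive but leaves more to be filled in at precisely the step you flag (turning uniform LAN into genuine Markov kernels over all of $K$). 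Your observation that the sample mean and a linear functional of $Y$ give $\sqrt{n}$-consistent preliminary estimators is correct and is what makes the localization painless here; the paper's route via Hellinger metric dimension encodes the same information more compactly.
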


The densities in the location family $\Theta(K)$ are not bounded away from zero on $[0,1]$, with $f_\theta$ equal to zero on $[0,\theta]\cup [\theta+1/2,1]$, yet asymptotic equivalence still holds. The reason for this is that for areas of $[0,1]$ where there are too few observations to admit a Gaussian approximation, the required information is provided by the parameter estimates for $\theta$. A sufficient condition for this is finite Hellinger \textit{metric dimension} in the density model, not to be confused with finite vectorial dimension of the parameter space, see Assumption (A3) of Le Cam \cite{LeCam1985}. Recall that a family $(f_\theta:\theta \in \Theta')$ of density functions is said to have finite Hellinger metric dimension if there exists a number $D\geq 0$ such that every subset of $(f_\theta:\theta \in \Theta')$ which can be covered by an $\varepsilon$-ball in Hellinger distance $H$, can be covered by at most $2^D$ $\varepsilon/2$-balls in $H$, where $D$ does not depend on $\varepsilon$. For example, the family of densities $\{ C\exp(-|x-\theta|^\alpha) :\theta\in\R\}$ on $\R$ for some $\alpha\in(0,1/2)$ has vectorial dimension one yet does not have finite Hellinger metric dimension, see Remark 2 after Theorem 4.3 of Le Cam \cite{LeCam1985}. In this sense, Theorem \ref{thm.lb_small} is truly a nonparametric result.

One can extend this further by considering parameter spaces with a-priori known zeroes. For instance Mariucci \cite{mariucci2016b} establishes asymptotic equivalence for densities of the form $fg$, where $f\in C^{\beta}$, $\beta>1$, is an unknown function uniformly bounded away from zero and $g$ is a given known function that is possibly small. In view of the above, one may interpret this as a form of semiparametric model, with a parametric part $g$ determining the density for small values and the nonparametric part $f$ doing so for large values. Thus for areas of $[0,1]$ with sufficient observations, one can fit a Gaussian approximation based on $f$ as usual, whereas for regions with insufficient observations, one must use the information provided by the parameter estimate for $g$, which in this particular example arises from a zero-dimensional family since $g$ is known exactly.

\subsection*{Overview of the proof}

The proof of Theorem \ref{thm.lb_small} is based on a reduction to binary experiments and a direct comparison of the total variation distances between the parameters using the following lemma.

\begin{lem}
\label{lem.lecam_lb}
 Let $\mE_1^b=(\Omega_1,\mathcal{A}_1,(P_{1,i}: i\in \{1,2\}))$ and $\mE_2^b=(\Omega_2,\mathcal{A}_2,(P_{2,i}: i\in \{1,2\}))$ be binary experiments. Then
\begin{align*}
  \delta(\mE_1^b,\mE_2^b)
  \geq \frac 12 \big( \|P_{2,1}-P_{2,2}\|_{\TV} - \|P_{1,1}-P_{1,2}\|_{\TV}\big).
\end{align*}
\end{lem}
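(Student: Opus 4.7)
The plan is to unfold the definition of $\delta(\mE_1^b,\mE_2^b)$ as an infimum over Markov kernels and then exploit two standard facts: (i) Markov kernels are contractions in total variation, and (ii) total variation satisfies the triangle inequality.

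Concretely, I would first fix an arbitrary Markov kernel $M:\Omega_1 \times \mA_2 \to [0,1]$ and consider the pushforward measures $MP_{1,1}$ and $MP_{1,2}$ on $(\Omega_2,\mA_2)$. By the triangle inequality applied to the three measures $P_{2,1}$, $MP_{1,1}$, $MP_{1,2}$, $P_{2,2}$,
\begin{align*}
\|P_{2,1}-P_{2,2}\|_{\TV} &\leq \|P_{2,1}-MP_{1,1}\|_{\TV} + \|MP_{1,1}-MP_{1,2}\|_{\TV} \\
&\quad + \|MP_{1,2}-P_{2,2}\|_{\TV}.
\end{align*}
The middle term is controlled by the contraction property of Markov kernels, which gives $\|MP_{1,1}-MP_{1,2}\|_{\TV} \leq \|P_{1,1}-P_{1,2}\|_{\TV}$. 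The two remaining terms are each bounded by $\sup_{i\in\{1,2\}} \|MP_{1,i}-P_{2,i}\|_{\TV}$.

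Rearranging yields
\begin{align*}
\|P_{2,1}-P_{2,2}\|_{\TV} - \|P_{1,1}-P_{1,2}\|_{\TV} \leq 2 \sup_{i\in\{1,2\}} \|MP_{1,i}-P_{2,i}\|_{\TV}.
\end{align*}
Since the left-hand side does not depend on $M$, I can take the infimum over all Markov kernels $M$ on the right-hand side, which by definition equals $2\delta(\mE_1^b,\mE_2^b)$. Dividing by $2$ gives the claim.

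There is no real obstacle here: the only prerequisite is the contraction property of Markov kernels in total variation, which is standard (it follows from the dual representation $\|P-Q\|_{\TV}=\sup_{0\leq \phi \leq 1}|\int \phi\, d(P-Q)|$ together with the fact that $M\phi$ is again a $[0,1]$-valued measurable function when $\phi$ is). The rest is just the triangle inequality. I would only need to be mildly careful in stating that the infimum defining $\delta$ is indeed over the same class of Markov kernels used in the triangle-inequality step.
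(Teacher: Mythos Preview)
Your proof is correct and takes a genuinely different route from the paper's. The paper invokes the explicit formula $\delta(\mE_1^b,\mE_2^b)=\sup_{0\leq \xi \leq 1}[g_1(\xi)-g_2(\xi)]$ for binary experiments in terms of the Bayes error functions $g_j(\xi)=\inf_\phi[(1-\xi)P_{j,1}\phi+\xi P_{j,2}(1-\phi)]$ (Strasser, Corollary~15.7), and then simply evaluates at $\xi=1/2$, where $g_j(1/2)=\tfrac12(1-\|P_{j,1}-P_{j,2}\|_{\TV})$. Your argument instead works directly from the Markov-kernel definition of the deficiency, using only the triangle inequality and the TV-contraction property of Markov kernels. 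This is more elementary and self-contained: it avoids importing the error-function characterization of $\delta$ from Strasser, and in fact the same two lines give the inequality for \emph{any} pair of parameters in an arbitrary (not necessarily binary) experiment. The paper's route, on the other hand, makes the connection to testing explicit and yields an exact expression for $\delta(\mE_1^b,\mE_2^b)$ rather than just a lower bound, though only the lower bound at $\xi=1/2$ is used.
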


\begin{proof}
We have the explicit formula $\delta(\mE_1^b,\mE_2^b)=\sup_{0\leq \xi \leq 1} [g_1(\xi)-g_2(\xi)]$ with $g_j(\xi)=\inf [(1-\xi)P_{j,1}\phi+\xi P_{j,2}(1-\phi)]$ the error function in $\mE_j^b,$ $j\in\{1,2\}$, and where the infimum is over all tests $\phi$, see Strasser \cite{str}, Corollary 15.7 and Definition 14.1. Notice that the definition of deficiency in \cite{str}, Definition 15.1, has an additional factor $1/2$. The result then follows with $g_j(1/2)=\tfrac 12 (1-\|P_{j,1}-P_{j,2}\|_{\TV})$ (\cite{str}, p. 71).
\end{proof}

To establish asymptotic nonequivalence for a discrete experiment and its continuous analogue, a standard approach is to consider a sequence of binary experiments such that the total variation distance in the discrete model is zero (i.e. both measures are the same) but the total variation distance in the continuous model is positive. Lemma \ref{lem.lecam_lb} then yields asymptotic nonequivalence. 

This approach cannot be used here and the proof of Theorem \ref{thm.lb_small} requires a much more careful choice of the sequence of binary experiments. Consider a sequence of binary experiments $\{P_{f_n}^n,P_{g_n}^n\}$ in the density estimation setting with corresponding binary experiments $\{Q_{f_n}^n, Q_{g_n}^n\}$ in the Gaussian white noise model. The following result shows that the total variation distance in one experiment tends to zero if and only if the total variation in the other experiment also tends to zero. The same holds if the total variation distances both tend to one. Thus, in order to construct a lower bound via Lemma \ref{lem.lecam_lb}, such sequences cannot be used.

\begin{lem}
\label{lem.inform_distances}
Let $(f_n)_n$ and $(g_n)_n$ be arbitrary sequences of densities in both experiments $\mE_n^D(\Theta)$ and $\mE_n^G(\Theta).$ For $P_f^n$ the product probability measure for density estimation and $Q_f^n$ the law of the Gaussian white noise model \eqref{eq.GWN_in_intro},
\begin{align}
	\|P_{f_n}^n-P_{g_n}^n\|_{\TV}\rightarrow 0 \quad \Leftrightarrow \quad \|Q_{f_n}^n-Q_{g_n}^n\|_{\TV}\rightarrow 0 \quad 
	\Leftrightarrow \quad n \int (\sqrt{f_n}-\sqrt{g_n})^2 \rightarrow 0
	\label{eq.inform_dist_1}
\end{align}
and
\begin{align}
	\|P_{f_n}^n-P_{g_n}^n\|_{\TV}\rightarrow 1 \quad \Leftrightarrow \quad \|Q_{f_n}^n-Q_{g_n}^n\|_{\TV}\rightarrow 1 \quad 
	\Leftrightarrow \quad n \int (\sqrt{f_n}-\sqrt{g_n})^2 \rightarrow \infty.
	\label{eq.inform_dist_2}
\end{align}
If $H^2(P,Q) = \int(\sqrt{dP}-\sqrt{dQ})^2$ denotes the Hellinger distance, then for $n>1,$
\begin{align}
	H^2\big(Q_{f_n}^n,Q_{g_n}^n\big) 
	\leq H^2\big(P_{f_n}^n,P_{g_n}^n\big) 
	\leq H^2\big(Q_{f_n}^n,Q_{g_n}^n\big)  +\frac{2\log n}{n}.
	\label{eq.hellinger_info_approx}
\end{align}
\end{lem}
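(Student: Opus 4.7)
The plan is to compute both Hellinger distances in closed form, establish the sandwich \eqref{eq.hellinger_info_approx} by an elementary Poisson-approximation estimate, and then transfer everything to total variation using Le Cam's classical inequalities.

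Set $\rho_n := \int(\sqrt{f_n}-\sqrt{g_n})^2$, the single-observation squared Hellinger distance. The product structure of the density estimation model gives $\int\sqrt{dP_{f_n}^n\, dP_{g_n}^n} = (1-\rho_n/2)^n$, so $H^2(P_{f_n}^n,P_{g_n}^n) = 2[1-(1-\rho_n/2)^n]$. For the Gaussian white noise model, a standard Cameron--Martin/Girsanov computation for two drifted Brownian motions with drifts $2\sqrt{f_n}, 2\sqrt{g_n}$ and noise level $n^{-1/2}$ yields Hellinger affinity $\exp(-n\rho_n/2)$, whence $H^2(Q_{f_n}^n,Q_{g_n}^n) = 2[1-e^{-n\rho_n/2}]$.

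The left inequality in \eqref{eq.hellinger_info_approx} follows immediately from $(1-x)^n \leq e^{-nx}$ applied at $x = \rho_n/2 \in [0,1]$. For the right inequality, writing $(1-\rho_n/2)^n = e^{-n\rho_n/2}\,e^{-s_n}$ with $s_n := n[-\log(1-\rho_n/2)-\rho_n/2] \geq 0$ and using $1-e^{-s}\leq s$ gives
\[
e^{-n\rho_n/2} - (1-\rho_n/2)^n \leq s_n\, e^{-n\rho_n/2}.
\]
Bounding $s_n$ by a Taylor estimate (e.g.\ $s_n \leq n\rho_n^2/(4(1-\rho_n/2))$) and setting $u = n\rho_n/2$ shows that the right-hand side is controlled by $u^2 e^{-u}/n$, which is $O(1/n)$ uniformly in $\rho_n$. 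A case split for $\rho_n$ bounded away from zero (where both $(1-\rho_n/2)^n$ and $e^{-n\rho_n/2}$ are exponentially small in $n$) completes the uniform bound and the slack between $O(1/n)$ and $\log n/n$ absorbs the small-$n$ cases down to $n=2$.

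Finally, Le Cam's inequalities $\tfrac12 H^2(P,Q) \leq \|P-Q\|_{\TV} \leq H(P,Q)\sqrt{1-H^2(P,Q)/4}$ imply $\|\cdot\|_{\TV} \to 0$ iff $H^2\to 0$ and $\|\cdot\|_{\TV}\to 1$ iff $H^2\to 2$. Applied to both pairs of experiments and combined with \eqref{eq.hellinger_info_approx} together with $\log n/n\to 0$, the four TV equivalences in \eqref{eq.inform_dist_1}--\eqref{eq.inform_dist_2} reduce to statements about $H^2(Q_{f_n}^n,Q_{g_n}^n) = 2(1-e^{-n\rho_n/2})$, which visibly tends to $0$ iff $n\rho_n\to 0$ and to $2$ iff $n\rho_n\to\infty$. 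The technically most delicate step is the Poisson-type bound above, since the worst case occurs precisely where $\rho_n$ is of order $1/n$ and both $e^{-n\rho_n/2}$ and $(1-\rho_n/2)^n$ are of order one; neither a Taylor expansion of $\log(1-x)$ alone nor a naive exponential bound on $(1-\rho_n/2)^n$ suffices, so the two regimes must be combined.
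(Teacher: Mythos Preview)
Your argument is correct and follows the same architecture as the paper's: compute both Hellinger distances in closed form, establish the sandwich \eqref{eq.hellinger_info_approx}, and then pass to total variation via Le~Cam's inequalities. The one substantive difference is in the upper bound of \eqref{eq.hellinger_info_approx}: the paper cites Lemmas~2.17 and~2.19 of Strasser to obtain the intermediate estimate $H^2(P_{f_n}^n,P_{g_n}^n)\le H^2(Q_{f_n}^n,Q_{g_n}^n)+\rho_n/2$ and then case-splits on whether $n\rho_n/2$ lies above or below $\log n$ (in the former case using simply $H^2(P)\le 2$ and $H^2(Q)\ge 2-2/n$), whereas you bound $e^{-n\rho_n/2}-(1-\rho_n/2)^n$ directly via the identity $e^{-u}(1-e^{-s_n})\le e^{-u}s_n$ together with a Taylor estimate on $s_n$. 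Your route is fully self-contained; the paper's is a line or two shorter once the cited inequality is granted. One cosmetic remark: with the Taylor constant you quote, $s_n\le n\rho_n^2/(4(1-\rho_n/2))$, the resulting $O(1/n)$ bound is not quite below $(\log n)/n$ at $n=2$; replacing $4$ by $8$ (equally valid, since $-\log(1-x)-x\le x^2/(2(1-x))$ for $x\in[0,1)$) makes your ``slack absorbs the small-$n$ cases'' claim go through cleanly.
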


\begin{proof}
We first prove \eqref{eq.hellinger_info_approx}. By Lemma \ref{lem.bds_of_info_distances} below, $H^2(Q_{f_n}^n, Q_{g_n}^n)  = 2-2\exp(- \tfrac n2 \|\sqrt{f_n}-\sqrt{g_n}\|_2^2).$ Together with Lemmas 2.17 and 2.19 of \cite{str}, this proves 
\begin{align*}
	H^2\big(Q_{f_n}^n,Q_{g_n}^n\big) 
	\leq H^2\big(P_{f_n}^n,P_{g_n}^n\big) 
	\leq H^2\big(Q_{f_n}^n,Q_{g_n}^n\big)  +\frac 12 \int (\sqrt{f_n}-\sqrt{g_n})^2.	
\end{align*}
Distinguishing whether the term $\tfrac n2 \int (\sqrt{f_n}-\sqrt{g_n})^2$ is larger or smaller than $\log n,$ and using that $H^2(Q_{f_n}^n,Q_{g_n}^n)\geq 2-2n^{-1}$ if it is, then establishes \eqref{eq.hellinger_info_approx}.

To verify the first two assertions of the lemma, notice that by Le Cam's inequalities (Lemma 2.3 in \cite{tsybakov2009}), for any probability measures $P,Q,$ 
\begin{align}
	\frac{1}{2} H^2(P,Q) \leq  \|P-Q\|_{\TV} \leq  \min \left\{ H(P,Q) , \Big( 1 - \frac{1}{2}\big(1- \tfrac 12 H^2(P,Q) \big)^2 \Big) \right\}.
	\label{eq.Hellinger_TV_ineqs}
\end{align}
Consequently, the total variation of two sequences $(P_n)$ and $(Q_n)$ converges to zero if and only if $H^2(P_n,Q_n) \rightarrow 0.$ Similarly,  $\|P_n-Q_n\|_{\TV}\rightarrow 1$ if and only if $H^2(P_n,Q_n) \rightarrow 2.$ Using \eqref{eq.hellinger_info_approx} and $H^2(Q_{f_n}^n, Q_{g_n}^n) =  2-2\exp(- \tfrac n2 \|\sqrt{f_n}-\sqrt{g_n}\|_2^2),$ \eqref{eq.inform_dist_1} and \eqref{eq.inform_dist_2} follow.
\end{proof}

In view of this, we must construct sequences such that the total variation distances in the two experiments tend neither to zero nor one and are separated for $n$ large enough.

In the following we describe the ideas that finally lead to a lower bound. As a first step, we use Lemma \ref{lem.TV_bd} below to show that in the density estimation model,
\begin{align*}
	\| P_f^n -P_g^n \|_{\TV} \leq 1- \Big( 1- \frac{\|f-g\|_{1}}{2}\Big)^n.
\end{align*}
In the Gaussian white noise model, we have by Lemma \ref{lem.bds_of_info_distances} that $\|Q_f^n- Q_g^n\|_{\TV} = 1-2\Phi(-\sqrt{n} \|\sqrt{f}-\sqrt{g}\|_2)$ with $\Phi$ the distribution function of a standard normal random variable. For the Le Cam deficiency of the binary experiments with parameter space $\Theta=\{f,g\},$ Lemma \ref{lem.lecam_lb} then implies the following lower bound:
\begin{align}
	\delta\big( \mE_n^D(\{f,g\}), \mE_n^G(\{f,g\}) \big) 
	\geq \frac 12 \Big( 1- \frac{\|f-g\|_{1}}{2}\Big)^n - \Phi\big(-\sqrt{n} \|\sqrt{f}-\sqrt{g}\|_2\big).
	\label{eq.lb_binary_explicit}
\end{align}
To prove asymptotic nonequivalence, we therefore want to construct sequences $(f_n)_n, (g_n)_n \subseteq \Theta$ such that the total variation $\|f_n-g_n\|_{1}$ is small while the Hellinger distance $\|\sqrt{f_n}-\sqrt{g_n}\|_2$ is large. The largest value of the Hellinger distance is given by Le Cam's inequalities \eqref{eq.Hellinger_TV_ineqs}, $\|\sqrt{f_n}-\sqrt{g_n}\|_2 \leq \|f_n-g_n\|_1^{1/2}.$ An inspection of the proof shows that equality is achieved if for all $x,$ either $f_n(x)=0,$ $g_n(x) =0$ or $f_n(x)=g_n(x).$ This is a first indication that the bound \eqref{eq.lb_binary_explicit} is particularly useful for small densities. 

We now provide a heuristic showing that the reduction to a binary experiment can only be used if the parameter space contains small densities. Observe that in view of Lemma \ref{lem.inform_distances}, we need $\|f_n-g_n\|_{1} \asymp \|\sqrt{f_n}-\sqrt{g_n}\|_2^2 \asymp 1/n$ to show that the Le Cam deficiency is lower bounded by a positive constant. The standard approach for nonparametric two hypothesis lower bounds is to consider one function as a local perturbation of the other. For fixed $f_n$ and $K \not\equiv 0$ a smooth function with $\int K=0$ and support in $[-1,1],$ set
\begin{align}
	g_n = f_n+ h_n^\beta K\Big( \frac{\cdot - x_0}{h_n} \Big),
	\label{eq.density_perturb}
\end{align}
where $x_0 \in (0,1)$ is fixed and $h_n>0$. If $f_n$ is a $\beta$-H\"older smooth function, a standard argument shows that $g_n$ is also $\beta$-H\"older smooth. If the perturbation is small enough, then $g_n\geq 0$ is a density since $\int K=0$. The perturbation has height $O(h_n^\beta)$ and support of length $2h_n,$ which means that the total variation distance is of the order $h_n^{\beta+1}.$ To ensure that $\|f_n-g_n\|_{1} \asymp 1/n,$ we therefore take $h_n \asymp n^{-1/(\beta+1)}.$ On the other hand, the squared Hellinger distance satisfies
\begin{align}
	\|\sqrt{f_n}-\sqrt{g_n}\|_2^2 = \int \frac{(f_n -g_n)^2}{(\sqrt{f_n} +\sqrt{g_n})^2} \asymp \frac{h_n^{2\beta+1}}{f_n(x_0) + h_n^\beta}.
	\label{eq.why_it_doesnt_work_for_large_dens}
\end{align} 
To ensure the right hand side is of order $1/n,$ we consequently need $f(x_0) =O(h_n^\beta) = O(n^{-\frac{\beta}{\beta+1}}).$ If all densities are bounded away from zero, a different approach based on a multiple testing problem is needed to obtain sharp lower bounds \cite{raySH2016}.

To summarize, we have used that in the density estimation model the total variation of the product measures $P_f^n$ and $P_g^n$ can be bounded in terms of the total variation between the densities $f$ and $g.$ On the contrary, in the Gaussian white noise model the total variation distance is a function of the Hellinger distance of $f$ and $g.$ The total variation distance is bounded from below by the {\it  squared} Hellinger distance and from above by the Hellinger distance via \eqref{eq.Hellinger_TV_ineqs}. Nonequivalence can therefore be established using the inequality \eqref{eq.lb_binary_explicit} if the total variation between $f$ and $g$ behaves like the squared Hellinger distance, which happens exactly when the densities are small.

\section{Proofs}
\label{sec.lbs_irregular}

We construct two test functions and use that the Le Cam deficiency is bounded from below by the difference of the total variation distances. To prove Theorem \ref{thm.lb_small}, it is by \eqref{eq.lb_binary_explicit} enough to show that for some densities $f_{1,n}, f_{2,n}\in \Theta_n,$
\begin{align}
	\frac 12 \Big( 1- \frac{\|f_{1,n}-f_{2,n}\|_{1}}{2}\Big)^n - \Phi\big(-\sqrt{n} \|\sqrt{f_{1,n}}-\sqrt{f_{2,n}}\|_2\big)
	& \geq \frac 12e^{-\frac{3}{2}}\Big( 1-\sqrt{\frac e{\pi}}\Big) +o(1) \notag \\
	&\geq 0.007+ o(1).
	\label{eq.low_bound_to_show}
\end{align}
We henceforth omit the index $n$ for convenience, writing $f_1=f_{1,n}$ and $f_2=f_{2,n}.$ Before we describe the construction of $f_1, f_2,$ we first recall the following basic property of functions in the flat H\"older space $\mH^\beta.$

\begin{lem}[Lemma 1 in \cite{RaySchmidt-Hieber2015c}]
\label{lem.fx_local_bd}
Suppose that $f \in \mH^\beta$ with $\beta>0$ and let $a = a(\beta)>0$ be any constant satisfying $(e^a-1) + a^\beta / (\lfloor \beta \rfloor!) \leq 1/2.$ Then for 
\begin{align*}
	|h| \leq a \left( \frac{|f(x)|}{\|f\|_{\mH^\beta}}\right)^{1/\beta},
\end{align*}
we have
\begin{equation*}
|f(x+h) - f(x)| \leq \frac{1}{2} |f(x)| ,
\end{equation*}
implying in particular, $|f(x)|/2 \leq |f(x+h)| \leq 3|f(x)|/2$.
\end{lem}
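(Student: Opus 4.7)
The plan is to apply Taylor's theorem with Lagrange remainder up to order $k := \lfloor \beta \rfloor$ at the point $x$, and to control each term separately using the two ingredients that make up the $\mH^\beta$-norm: the flatness seminorm $|f|_{\mH^\beta}$ handles the derivative terms $f^{(j)}(x)$ for $1 \leq j \leq k$, while the classical H\"older seminorm $|f|_{\mC^\beta}$ handles the residual $(f^{(k)}(x+\tau h)-f^{(k)}(x))$. The final inequality on $a$ is designed precisely so that these two contributions sum to at most $|f(x)|/2$.

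More concretely, first I would observe that the definition \eqref{eq.flat_def} of $|f|_{\mH^\beta}$ gives the pointwise inequality
\begin{align*}
|f^{(j)}(y)| \;\leq\; |f|_{\mH^\beta}^{j/\beta}\,|f(y)|^{(\beta-j)/\beta} \;\leq\; \|f\|_{\mH^\beta}^{j/\beta}\,|f(y)|^{(\beta-j)/\beta}, \qquad 1\leq j\leq k,
\end{align*}
for every $y\in[0,1]$. Combined with the hypothesis $|h|\leq a(|f(x)|/\|f\|_{\mH^\beta})^{1/\beta}$, this yields the clean bound $|h|^j |f^{(j)}(x)|/j! \leq (a^j/j!)|f(x)|$, which is the key algebraic identity that makes the constants line up.

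Next I would write the Lagrange form of Taylor's theorem,
\begin{align*}
f(x+h)-f(x)\;=\;\sum_{j=1}^{k}\frac{h^j}{j!}\,f^{(j)}(x)\;+\;\frac{h^k}{k!}\bigl(f^{(k)}(x+\tau h)-f^{(k)}(x)\bigr)
\end{align*}
for some $\tau\in[0,1]$ (with the sum being empty and the remainder being $f(x+h)-f(x)$ itself when $0<\beta\le 1$, which is the degenerate case). Summing the derivative bound from the previous step over $j=1,\ldots,k$ gives $\sum_{j=1}^{k}(a^j/j!)|f(x)| \leq (e^a-1)|f(x)|$. For the remainder, the H\"older condition on $f^{(k)}$ gives $|f^{(k)}(x+\tau h)-f^{(k)}(x)| \leq |f|_{\mC^\beta}|h|^{\beta-k}$, so the remainder is bounded by $|h|^\beta|f|_{\mC^\beta}/k! \leq \|f\|_{\mH^\beta}|h|^\beta/k! \leq (a^\beta/k!)|f(x)|$, using the hypothesis on $h$ once more. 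Adding the two contributions and invoking the defining inequality $(e^a-1)+a^\beta/\lfloor\beta\rfloor! \leq 1/2$ for $a$ yields $|f(x+h)-f(x)|\leq |f(x)|/2$, and the two-sided bound $|f(x)|/2\leq |f(x+h)|\leq 3|f(x)|/2$ then follows from the triangle inequality.

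There is no real obstacle here beyond careful bookkeeping; the only point requiring mild attention is the degenerate range $0<\beta\leq 1$, where $k=0$, $|f|_{\mH^\beta}=0$, and the derivative sum is empty, so the argument collapses to the single remainder bound $|f(x+h)-f(x)|\leq |f|_{\mC^\beta}|h|^\beta \leq a^\beta|f(x)|\leq |f(x)|/2$, which is implied by the stated condition on $a$. Apart from this case split, the argument is a direct Taylor expansion and the conclusion follows by plugging in the hypothesis on $|h|$.
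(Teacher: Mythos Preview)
Your proof is correct. The paper does not actually prove this lemma here; it is quoted from \cite{RaySchmidt-Hieber2015c} without proof. Your argument---Taylor expansion to order $k=\lfloor\beta\rfloor$, bounding the polynomial part via the flatness seminorm $|f^{(j)}(x)|\leq \|f\|_{\mH^\beta}^{j/\beta}|f(x)|^{(\beta-j)/\beta}$ and the remainder via the H\"older seminorm---is exactly the natural approach and is how the constants $(e^a-1)$ and $a^\beta/\lfloor\beta\rfloor!$ arise in the hypothesis on $a$, so it is almost certainly the intended argument in the cited reference as well.
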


{\it Construction of $f_1, f_2 \in \Theta_n :$} For a given density $f_0,$ we consider two perturbations of $f_0$ for an $x_0$ such that $f_0(x_0) \asymp n^{-\beta/(\beta+1)}.$ This choice is natural in view of \eqref{eq.why_it_doesnt_work_for_large_dens}. The way we construct the perturbations is for technical convenience slightly different than in \eqref{eq.density_perturb}. By assumption there exist densities $f_0:=f_{0,n} \in \mH^\beta( R)$ such that for some $x_0:=x_{0,n}\in [0,1],$ $ R^{1/(\beta+1)}(2n)^{-\frac{\beta}{\beta+1}} \leq f_0(x_0)\leq R^{1/(\beta+1)}n^{-\frac{\beta}{\beta+1}}$ for all $n.$ Without loss of generality, we may assume that $x_0\leq 1/2.$ We must ensure that we can apply Lemma \ref{lem.fx_local_bd} on the support of the perturbations, which motivates the following definitions.  With $0<a\leq 1/4$ a solution of $e^a-1+a^\beta/\lfloor \beta \rfloor !\leq 1/2,$ set
\begin{align*}
	F:= \frac{a f_0(x_0)^{\frac{\beta+1}{\beta}}}{4R^{\frac 1{\beta}}} \leq \frac{1}{16n}
\end{align*}
and observe that $F\geq a/(8n).$ Given $x_0,$ pick $x_1<x_2$ such that $\int_{x_0}^{x_1}f_0(x) dx = \int_{x_1}^{x_2} f_0(x) dx =F.$ By Lemma \ref{lem.fx_local_bd}, $\int_{x_0}^{x_0+a f_0(x_0)^{1/\beta}/R^{1/\beta}} f_0(x) dx \geq 2F$, which implies
\begin{align}
	x_2\leq x_0+ a f_0(x_0)^{1/\beta}/R^{1/\beta} \leq 1/2+R^{-\frac{1}{\beta(\beta+1)}} n^{-\frac{\beta}{\beta+1}},
	\label{eq.x2x0ineq}
\end{align}
so that $x_2 \leq 1$ for $n\geq n_0(R,\beta)$ large enough.

\begin{figure}
\begin{center}
	\includegraphics[scale=0.7]{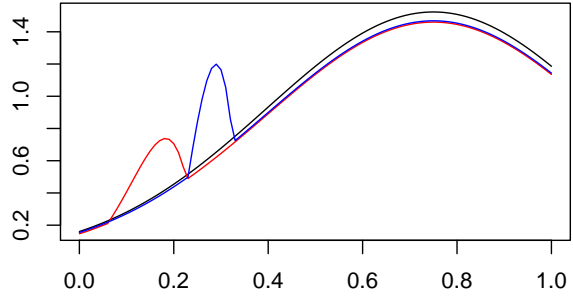}
	\vspace{-1cm}
	\caption{Plot of the densities $f_0$ (black), $f_1$ (red), $f_2$ (blue).\label{fig}}
\end{center}
\end{figure}

Let $K\in \mC^\beta(\mathbb{R})$ be a non-negative function supported on $[0,1]$ and satisfying $\int_0^1 K(u) du =1.$ For $\gamma$ the solution of $\sqrt{1+\gamma} := 1+1/\sqrt{nF},$ consider the two test functions
\begin{align}
	f_j(x) = f_0 (x) \left( 1 -\gamma F + \gamma K\left(\frac{F_0(x)-F_0(x_{j-1})}{F}\right) \right), \quad j\in \{1,2\},
	\label{eq.fj_def}
\end{align}
where $F_0$ is the distribution function of $f_0$. Figure \ref{fig} displays an example of this construction. Since $\gamma = 1/(nF)+2/\sqrt{nF}$, it follows that $\gamma F \leq 3/(2n)$ and $1-\gamma F >0$ for $n\geq 2.$ By substitution, $\int_0^1 f_j(x) dx =1$ and thus the $f_j$ are densities. Moreover, $f_1-f_0$ and $f_2-f_0$ have disjoint support. We also have the following proposition that is proved below.

\begin{prop}
\label{prop.fj_in_mHbeta}
There exists a finite constant $C$, not depending on $R$, such that $f_1, f_2 \in \mH^\beta (CR).$
\end{prop}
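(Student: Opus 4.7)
\textbf{Proof plan for Proposition \ref{prop.fj_in_mHbeta}.}

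The plan is to write $f_j = (1-\gamma F)f_0 + \gamma g_j$ with $g_j(x) := f_0(x)\psi_j(x)$ and $\psi_j(x) := K((F_0(x)-F_0(x_{j-1}))/F)$, and to control $g_j$ in $\mathcal{H}^\beta$-norm on its support $[x_{j-1},x_j] \subset [x_0,x_2]$. Note that $\gamma$ is bounded above by a universal constant (since $a/(8n) \leq F \leq 1/(16n)$ means $nF$ is bounded between positive constants, so $\gamma = 1/(nF) + 2/\sqrt{nF} \lesssim 1$), so it is enough to show $\|g_j\|_{\mathcal{H}^\beta} \leq CR$ for some $C$ independent of $R$. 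Let $h := (f_0(x_0)/R)^{1/\beta}$. From \eqref{eq.x2x0ineq} we have $[x_0,x_2] \subset [x_0, x_0 + ah]$, so Lemma \ref{lem.fx_local_bd} applies on all of $[x_0,x_2]$ and gives $f_0(x) \asymp f_0(x_0)$ there. The flatness bound $\|f_0\|_{\mathcal{H}^\beta}\leq R$ translates pointwise to $|f_0^{(k)}(x)| \leq R^{k/\beta} f_0(x)^{(\beta-k)/\beta} \lesssim R^{k/\beta} f_0(x_0)^{(\beta-k)/\beta}$ on this interval for $1\leq k \leq \lfloor\beta\rfloor$.

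Next I would estimate $\psi_j$ via Fa\`a di Bruno. Since $u(x) := (F_0(x)-F_0(x_{j-1}))/F$ satisfies $u^{(j)} = f_0^{(j-1)}/F$ and $F \asymp f_0(x_0)h$, the step above yields $|u^{(j)}(x)| \lesssim h^{-j}$ for $1\leq j \leq \lfloor\beta\rfloor$. Fa\`a di Bruno then produces $|\psi_j^{(m)}(x)| \lesssim h^{-m}$ with constants depending only on $K$ and $\beta$, and Leibniz's rule combined with the bound on $f_0^{(i)}$ gives
\begin{equation*}
|g_j^{(k)}(x)| \lesssim \sum_{i=0}^{k} |f_0^{(i)}(x)|\,|\psi_j^{(k-i)}(x)| \lesssim R^{k/\beta} f_0(x_0)^{(\beta-k)/\beta}, \qquad 0\leq k \leq \lfloor\beta\rfloor.
\end{equation*}
For the Hölder seminorm $|g_j|_{\mathcal{C}^\beta}$, the dominant contribution to $g_j^{(\lfloor\beta\rfloor)}$ has the form $f_0(x)K^{(\lfloor\beta\rfloor)}(u(x))(u'(x))^{\lfloor\beta\rfloor}$; its $(\beta-\lfloor\beta\rfloor)$-Hölder seminorm is controlled by $f_0(x_0)\cdot h^{-\lfloor\beta\rfloor}\cdot h^{-(\beta-\lfloor\beta\rfloor)} = f_0(x_0)h^{-\beta} = R$. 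The remaining terms are treated analogously using the $(\beta-\lfloor\beta\rfloor)$-Hölder continuity of $f_0^{(\lfloor\beta\rfloor)}$ and of $K^{(l)}$, $l<\lfloor\beta\rfloor$. Finally, using $f_0(x_0)^{(\beta-k)/\beta} \leq R^{(\beta-k)/(\beta(\beta+1))}$ and computing $k/\beta + (\beta-k)/(\beta(\beta+1)) \leq (\lfloor\beta\rfloor+1)/(\beta+1) \leq 1$ for $k \leq \lfloor\beta\rfloor$, we get $R^{k/\beta}f_0(x_0)^{(\beta-k)/\beta} \leq R$, so $\|g_j\|_{\mathcal{C}^\beta} \lesssim R$.

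It remains to check the flatness seminorm $|f_j|_{\mathcal{H}^\beta}$. Outside $[x_{j-1},x_j]$ we have $f_j = (1-\gamma F)f_0$, so $|f_j^{(k)}|^\beta/|f_j|^{\beta-k} = (1-\gamma F)^k |f_0^{(k)}|^\beta/|f_0|^{\beta-k} \leq R^k$. On the support of the perturbation, $K\geq 0$ and $1-\gamma F \geq 1/4$ for $n\geq 2$ give $f_j \geq f_0/4$, while the derivative bound from Step 4 and $f_0 \asymp f_0(x_0)$ yield $|f_j^{(k)}|^\beta \lesssim R^k f_0(x_0)^{\beta-k} \lesssim R^k f_j(x)^{\beta-k}$. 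Hence $|f_j|_{\mathcal{H}^\beta} \leq CR$, and combining with $\|f_j\|_{\mathcal{C}^\beta} \leq (1-\gamma F)R + \gamma\|g_j\|_{\mathcal{C}^\beta} \leq CR$ proves $\|f_j\|_{\mathcal{H}^\beta} \leq CR$. The main technical obstacle is the bookkeeping in the Fa\`a di Bruno/Leibniz expansion of $g_j^{(\lfloor\beta\rfloor)}$ and verifying that \emph{every} term scales exactly as $R^{k/\beta}f_0(x_0)^{(\beta-k)/\beta}$; once this scaling is in place, the algebraic identity $(\lfloor\beta\rfloor+1)/(\beta+1) \leq 1$ does the rest and guarantees independence of the constant $C$ from $R$.
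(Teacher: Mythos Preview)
Your proposal is correct and follows essentially the same route as the paper: Leibniz plus Fa\`a di Bruno applied to $f_0\cdot(K\circ v_j)$, combined with the pointwise flatness bounds $|f_0^{(k)}|\le R^{k/\beta}f_0^{(\beta-k)/\beta}$ on $[x_0,x_2]$, and then the lower bound $f_j\gtrsim f_0$ to handle $|f_j|_{\mathcal H^\beta}$. Your introduction of the single scale $h=(f_0(x_0)/R)^{1/\beta}$ (so that $F\asymp f_0(x_0)h$ and $|u^{(j)}|\lesssim h^{-j}$) is a clean way to organize the same computations the paper carries out term by term; note, incidentally, that for the sup-norm bounds you can simply use $f_0(x_0)\le\|f_0\|_\infty\le R$, which gives $R^{k/\beta}f_0(x_0)^{(\beta-k)/\beta}\le R$ directly and avoids the detour through $f_0(x_0)\le R^{1/(\beta+1)}$.
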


Using $\gamma F\leq 3/(2n)$ and $\gamma \leq (1+\sqrt{8/a})^2$ gives 
\begin{align*}
1-\frac{3}{2n}\leq \frac{f_j(x)}{f_0(x)} \leq 1+(1+\sqrt{8/a})^2\|K\|_\infty.
\end{align*}
It therefore follows that $f_1, f_2 \in \{f\in \mH^\beta(cR): c^{-1}f_0 \leq f \leq cf_0\}$ for
\begin{align}\label{eq:c}
c= \max\big(C, 4, 1+(1+\sqrt{8/a})^2\|K\|_\infty\big)
\end{align}
and thus by assumption $f_1,f_2 \in \Theta_n.$ We now establish \eqref{eq.low_bound_to_show} for these $f_1,f_2\in \Theta_n.$

{\it Lower bound for $\tfrac 12 (1-\tfrac 12 \|f_1-f_2\|_1)^n$}: Using $\int_0^1 K(u) du =1$ and substituting $u=(F_0(x)-F_0(x_{j-1}))/F,$ we get $\|f_1-f_2\|_1 = 2\gamma F.$ Since $\gamma F\leq 3/(2n),$
\begin{align}
	\frac 12 \Big( 1- \frac{\|f_{1}-f_{2}\|_{1}}{2}\Big)^n
	\geq \frac 12 \Big( 1- \frac{3}{2n}\Big)^n
	\rightarrow \frac 12 e^{-\frac{3}{2}}.
	\label{eq.lb_part1}
\end{align}

{\it Upper bound for $\Phi(-\sqrt{n}\|\sqrt{f_1}-\sqrt{f_2}\|_2)$}: This is equivalent to lower bounding $ \|\sqrt{f_1}-\sqrt{f_2}\|_2.$ Splitting the integral $\int_0^1$ into $\int_{x_0}^{x_1}+\int_{x_1}^{x_2}+\int_{[x_0,x_2]^c},$ using the properties of $K,$ substitution and the Cauchy-Schwarz inequality yields
\begin{align*}
	 \|\sqrt{f_1}-\sqrt{f_2}\|_2^2 
	  &= 2F\int_0^1 \big( \sqrt{1-\gamma F+\gamma K(u)} - \sqrt{1-\gamma F}\big)^2 du \\
	  &\geq 2F(1-\gamma F) \int_0^1 \big(\sqrt{1+\gamma K(u) } -1\big)^2 du \\
	  &= 2F(1-\gamma F)  \big( \gamma +2 - 2  \int_0^1 \sqrt{1+\gamma K(u) } du \big) \\
	  &\geq 2F(1-\gamma F) \big(\sqrt{1+\gamma}-1 \big)^2 \\
	  &= \frac{2-2\gamma F}n \\
	  &\geq \frac{2}{n} - \frac{3}{n^2}
\end{align*}
where in the last two lines we have used the definition of $\gamma$ and $\gamma  F\leq 3/(2n).$ For $x> 0,$ we find using the standard Gaussian tail bound $\Phi(-x)=1-\Phi(x) \leq (2\pi)^{-1/2}x^{-1} e^{-x^2/2}$, so that we finally obtain
\begin{align*}
	\Phi\big(-\sqrt{n}\big\|\sqrt{f_1}-\sqrt{f_2}\big\|_2\big) 
	\leq \Phi \big(- \sqrt{2}(1+o(1))\big) \rightarrow \Phi \big(-\sqrt{2}\big) \leq \frac{1}{2e\sqrt{\pi}}.
\end{align*}
The last bound and \eqref{eq.lb_part1} together imply \eqref{eq.low_bound_to_show}, which completes the proof of Theorem \ref{thm.lb_small}.\qed

\begin{proof}[Proof of Proposition \ref{prop.fj_in_mHbeta}] In this proof we write $a_n\lesssim b_n$ if $a_n \leq Cb_n$ for a constant $C$ which does not depend on $R$, but might depend on $\beta$ and $K.$ Note that $\gamma \leq (1+\sqrt{8/a})^2.$ Recall that the support of $f_j-f_0$ is $[x_{j-1},x_j]$ and that by Lemma \ref{lem.fx_local_bd}, $\tfrac 12 f_0(x) \leq f_0(x_0) \leq 2f_0(x)$ for all $x\in [x_0,x_2].$ The sup-norm can be easily bounded by $\|f_j\|_{\infty} \leq \|f_0\|_\infty (1+\gamma \|K\|_\infty) \lesssim R.$

For $0<\beta \leq 1,$ using the definition of $F$,
\begin{align*}
	|f_j|_{\mC^\beta}
	&\leq  |f_0|_{\mC^\beta} \big(1+\gamma \|K\|_{\infty}\big)+ 2\gamma f_0(x_0) \left|K\left( \frac{F_0(\cdot)-F_0(x_{j-1})}{F} \right) \right|_{\mC^\beta} \\
	&\leq  R \big(1+\gamma \|K\|_{\infty}\big)+ 2\gamma f_0(x_0) |K|_{\mC^\beta} \sup_{x,y\in[x_{j-1},x_j]:x\neq y} \frac{|F_0(x)-F_0(y)|^{\beta}}{F^\beta |x-y|^\beta} \\
	& \leq R (1+ \gamma \|K\|_\infty) + 2^{\beta+1} \gamma f_0(x_0)^{\beta+1} |K|_{\mC^\beta} F^{-\beta} \\
	&\leq R (1+ \gamma \|K\|_\infty + 16a^{-\beta} \gamma |K|_{\mC^\beta}).
\end{align*}
Since $|f_j|_{\mH^\beta}=0$ for $0<\beta\leq 1$ by definition, it follows that $\|f_j\|_{\mH^\beta} \lesssim R$.

We now bound $|f_j|_{\mC^\beta}$ for $\beta>1.$ Since $|f_0(1-\gamma F)|_{\mC^\beta}\leq R,$ it remains to show $|f_0 \cdot (K \circ v_j)|_{\mC^\beta} \lesssim R$ with $v_j(x):= (F_0(x)-F_0(x_{j-1}))/F.$ Let  $1\leq r \leq \lfloor \beta\rfloor$. For two $r$-times differentiable functions $g, h,$ $(gh)^{(r)} = \sum_{q=0}^r \binom{r}{q}g^{(q)}h^{(r-q)}.$ Moreover, by Fa\`a di Bruno's formula,
\begin{align*}
	\big( K\circ v_j \big)^{(q)} 
	&= \sum \frac{q!}{m_1!\dots m_q!}  (K^{(M_q)}\circ v_j) \prod_{s=1}^q \left( \frac{v_j^{(s)}}{s!} \right)^{m_s} \\
	&= \sum c_{m_1, \ldots, m_q}   \frac{K^{(M_q)}\circ v_j}{F^{M_q}} \prod_{s=1}^q \big(f_0^{(s-1)}\big)^{m_s},
\end{align*}
where the sum is over all non-negative integers $m_1,\ldots,m_q$ with $m_1+2m_2+\ldots+q m_q =q$ and $M_q:=\sum_{s=1}^q m_s$. The $r$-th derivative of $f_0 \cdot (K \circ v_j)$ therefore equals
\begin{align}
	(K\circ v_j) f_0^{(r)}+ \sum_{q=1}^r  \binom{r}{q}  \sum c_{m_1, \ldots, m_q} \frac{ K^{(M_q)}\circ v_j}{F^{M_q}} f_0^{(r-q)} \prod_{s=1}^q \big(f_0^{(s-1)}\big)^{m_s},
	\label{eq.rth_deriv_of_f0Kvj}
\end{align}
where the second sum is over the same set of integers as above. We bound the $|\cdot|_{\mC^\beta}$-seminorm by proving a H\"older bound for each of the terms in \eqref{eq.rth_deriv_of_f0Kvj} individually, starting with the terms in the sum. For $1\leq q \leq r$ and ${\bf m}_q = (m_1,\dots,m_q)$ a $q$-tuple as above, write
\begin{align*}
\varphi(x) = \varphi_{{\bf m}_q} (x) = F^{-M_q} (K^{(M_q)}\circ v_j)(x) f_0^{(r-q)}(x) \prod_{s=1}^q f_0^{(s-1)}(x)^{m_s},
\end{align*}
so that we wish to prove $|\varphi(x)-\varphi(y)| \lesssim R|x-y|^{\beta-\lfloor\beta\rfloor}$. For any $x,y\in[x_0,x_2]$,
\begin{equation}\label{eq.main_split}
\begin{split}
|\varphi(x)-\varphi(y)| & \leq \frac{1}{F^{M_q}} \left| K^{(M_q)}\circ v_j(x) -  K^{(M_q)}\circ v_j(y) \right| \left| f_0^{(r-q)}(x) \prod_{s=1}^q f_0^{(s-1)}(x)^{m_s} \right| \\
& \quad + \frac{1}{F^{M_q}} \left| K^{(M_q)}\circ v_j(y) \left( f_0^{(r-q)}(x) \prod_{s=1}^q f_0^{(s-1)}(x)^{m_s} - f_0^{(r-q)}(y) \prod_{s=1}^q f_0^{(s-1)}(y)^{m_s} \right) \right|.
\end{split}
\end{equation}
Using the definition \eqref{eq.flat_def} of $\mH^\beta$ and that $\sum_{s=1}^q sm_s=q$,
\begin{equation}\label{eq.flat_prod}
\begin{split}
\left| f_0^{(r-q)}(x) \prod_{s=1}^q f_0^{(s-1)}(x)^{m_s} \right|&  \leq R^\frac{r-q}{\beta} f_0(x)^\frac{\beta-(r-q)}{\beta} \prod_{s=1}^q R^\frac{(s-1)m_s}{\beta}f_0(x)^\frac{(\beta-s+1)m_s}{\beta} \\
& = R^\frac{r-M_q}{\beta} f_0(x)^\frac{\beta-r+(\beta+1)M_q}{\beta}.
\end{split}
\end{equation}
Observe that by \eqref{eq.x2x0ineq} and the definition of $F$, for any $1\leq \ell <\beta$ and $x,y \in [x_0,x_2]$,
\begin{align*}
|K^{(\ell)}(v_j(x)) - K^{(\ell)}(v_j(y))| & \lesssim f_0(x_0) F^{-1}|x-y|^{\beta-\lfloor\beta\rfloor} |x_2-x_0|^{1-(\beta-\lfloor\beta\rfloor)} \\
&  \lesssim R^{\frac{\beta -\lfloor \beta\rfloor}{\beta}}f_0(x_0)^{\frac{\lfloor \beta\rfloor-\beta}{\beta}} |x-y|^{\beta-\lfloor \beta \rfloor}.
\end{align*}
Combining the previous two displays and again using the definition of $F$, the first term in \eqref{eq.main_split} is $O(R^\frac{\beta-\lfloor\beta\rfloor+r}{\beta} f_0(x_0)^\frac{\lfloor\beta\rfloor-r}{\beta}|x-y|^{\beta-\lfloor\beta\rfloor})= O(R |x-y|^{\beta-\lfloor\beta\rfloor})$ as required.

For $1\leq \ell<\lfloor \beta\rfloor$, $m \in \mathbb{N}$ and $x,y\in [x_0,x_2]$ with $x<y,$ we have using \eqref{eq.flat_def}, \eqref{eq.x2x0ineq} and the mean value theorem, that for some $\xi=\xi_{x,y}\in [x,y],$
\begin{align*}
|f_0^{(\ell)}(x)^m-f_0^{(\ell)}(y)^m| & \leq m|f_0^{(\ell+1)}(\xi)| |f_0^{(\ell)}(\xi)|^{m-1} (x_2-x_0)^{1-(\beta-\lfloor \beta\rfloor)}|x-y|^{\beta-\lfloor \beta \rfloor} \\
& \lesssim R^{\frac{\beta-\lfloor \beta\rfloor+\ell m}{\beta}} f_0(x_0)^{\frac{\lfloor \beta\rfloor - \beta +(\beta-\ell)m}{\beta}} |x-y|^{\beta-\lfloor \beta \rfloor}.
\end{align*}
Noting that for $\ell=\lfloor\beta\rfloor$, $m_\ell$ only takes values 0 or 1 in the sum in \eqref{eq.rth_deriv_of_f0Kvj}, one can extend the previous display to $\ell=\lfloor\beta\rfloor$ by directly using the H\"older continuity of $f_0^{(\lfloor\beta\rfloor)}$. For the second term in \eqref{eq.main_split}, we repeatedly apply the triangle inequality, each time changing the variable in a single derivative. Fix an integer $1 \leq k\leq q$ and define vectors $(z_s)_{s=1}^q$ and $(\tilde{z}_s)_{s=1}^q$, which are identically equal to $x$ or $y$ in all entries except the $k^{th}$-coordinate, where $z_k=x$ and $\tilde{z}_k=y$. Then using the previous display, a similar argument to \eqref{eq.flat_prod} and the definition of $F$,
\begin{align*}
& \frac{1}{F^{M_q}} \left| f_0^{(r-q)}(x)\right| \left| \prod_{s=1}^q f_0^{(s-1)}(z_s)^{m_s} -  \prod_{s=1}^q f_0^{(s-1)}(\tilde{z}_s)^{m_s}  \right| \\
& = \frac{1}{F^{M_q}} \left| f_0^{(r-q)}(x)\right|  \left| \prod_{s=1,s\neq k}^q f_0^{(s-1)}(z_s)^{m_s} \right|  \left| f_0^{(k-1)}(x)^{m_k} -  f_0^{(k-1)}(y)^{m_k}  \right| \\
& \lesssim R^\frac{\beta-\lfloor\beta\rfloor + r}{\beta} f_0(x_0)^\frac{\lfloor\beta\rfloor-r}{\beta} |x-y|^{\beta-\lfloor\beta\rfloor} \\
&\lesssim R |x-y|^{\beta-\lfloor\beta\rfloor},
\end{align*}
where is the last line we have used that $r\leq \lfloor\beta\rfloor$ and $\|f_0\|_\infty \leq R$. The same inequality can be established for $F^{-M_q}  |f_0^{(r-q)}(x)-f_0^{(r-q)}(y)| |\prod_{s=1}^q f_0^{(s-1)}(y)^{m_s}|$. Since $\|K^{(M_q)}\circ v_j\|_\infty \lesssim 1$, by repeatedly applying the triangle inequality and the last display, the second term in \eqref{eq.main_split} is $O(R |x-y|^{\beta-\lfloor\beta\rfloor})$, so that $|\varphi(x)-\varphi(y)| \lesssim R|x-y|^{\beta-\lfloor\beta\rfloor}$ as required. A similar, but simpler, argument shows that the first term in \eqref{eq.rth_deriv_of_f0Kvj} satisfies $|K\circ v_j(x) f_0^{(r)}(x)-K\circ v_j(y) f_0^{(r)}(y)| \lesssim R|x-y|^{\beta-\lfloor\beta\rfloor}$. This shows that every term in \eqref{eq.rth_deriv_of_f0Kvj}, and hence the whole of \eqref{eq.rth_deriv_of_f0Kvj}, satisfies the required H\"older bound, so that $|f_0 \cdot (K \circ v_j)|_{\mC^\beta} \lesssim R$.

We now prove that $|f_j|_{\mH^\beta} \lesssim R$ for $\beta>1$. By \eqref{eq.flat_def}, it suffices to show $|f_j^{(r)}(x)|\leq (C R)^{\frac r{\beta}} |f_j(x)|^{\frac{\beta-r}{\beta}}$ for all $x\in [0,1]$ and $r=1,\ldots, \lfloor\beta\rfloor$ and a constant $C$ that does not depend on $R.$ Since $K\geq 0$ and $\gamma F \leq 3/(2n),$ it is enough to show that $|f_j^{(r)}(x)|\leq (C' R)^{\frac r{\beta}} |f_0(x)|^{\frac{\beta-r}{\beta}}$ for all $x\in [0,1]$ and a possibly different constant $C'.$ This follows if $|(f_0\gamma (K\circ v_j))^{(r)}(x)|\leq (C'' R)^{\frac{r}{\beta}} |f_0(x)|^{\frac{\beta-r}{\beta}}$ for all $x\in [0,1],$ $r=1,\ldots, \lfloor\beta\rfloor$ and some $C''<\infty.$ This last inequality follows from \eqref{eq.rth_deriv_of_f0Kvj}, \eqref{eq.flat_prod}, the definition of $F$ and that $f_0 \in \mH^\beta(R).$ This also shows that $\|f_j^{(\lfloor \beta \rfloor)}\|_\infty \lesssim R^{\frac{\lfloor \beta\rfloor}{\beta}} \|f_0\|_\infty^{\frac{\beta-\lfloor \beta\rfloor}{\beta}}\lesssim R.$
\end{proof}

\begin{lem}
\label{lem.TV_bd}
For $P$ and $Q$ dominated probability measures, the product measures $P^{\otimes n} = P \otimes \cdots \otimes P$ and $Q^{\otimes n} = Q \otimes \cdots \otimes Q$ satisfy
\begin{align*}
	\big \| P^{\otimes n} - Q^{\otimes n} \big\|_{\TV} \leq 1 - \big(1-  \| P - Q \|_{\TV} \big)^n.
\end{align*}
\end{lem}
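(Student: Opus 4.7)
The plan is to prove this via a maximal coupling argument, which is the cleanest route to the multiplicative bound $(1-\|P-Q\|_{\TV})^n$ rather than the weaker additive bound $n\|P-Q\|_{\TV}$ one would get from a naive iteration.

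First I would recall (or prove via the standard construction) the maximal coupling characterization of total variation: for any two dominated probability measures $P$ and $Q$, there exists a coupling, i.e.\ a pair of random variables $(X,Y)$ on a common probability space with $X\sim P$ and $Y\sim Q$, such that
\begin{equation*}
\Pr(X\neq Y) = \|P-Q\|_{\TV}.
\end{equation*}
The explicit construction uses the Lebesgue decomposition: letting $p,q$ be densities with respect to a common dominating measure $\mu$, one sets $X=Y$ with probability $1-\|P-Q\|_{\TV}=\int (p\wedge q)d\mu$ drawn from the measure with density $(p\wedge q)/\int(p\wedge q)d\mu$, and with the remaining probability draws $X$ and $Y$ independently from the normalized positive and negative parts of $p-q$.

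Second, I would form $n$ independent copies $(X_i,Y_i)_{i=1}^n$ of this maximal coupling. By independence across $i$, the vector $(X_1,\dots,X_n)$ has law $P^{\otimes n}$, the vector $(Y_1,\dots,Y_n)$ has law $Q^{\otimes n}$, and
\begin{equation*}
\Pr\bigl((X_1,\dots,X_n)=(Y_1,\dots,Y_n)\bigr)=\prod_{i=1}^n \Pr(X_i=Y_i)=\bigl(1-\|P-Q\|_{\TV}\bigr)^n.
\end{equation*}

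Third, I would use the elementary coupling inequality $\|\mu-\nu\|_{\TV}\leq \Pr(U\neq V)$ valid for any coupling $(U,V)$ of $\mu$ and $\nu$, applied to the product coupling above, to conclude
\begin{equation*}
\|P^{\otimes n}-Q^{\otimes n}\|_{\TV}\leq 1-\bigl(1-\|P-Q\|_{\TV}\bigr)^n,
\end{equation*}
which is the required bound.

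The only slightly delicate point is the existence of a maximal coupling on a Polish/dominated setup, but this is entirely standard and can just be cited (e.g.\ Lindvall or Strasser). No other obstacle arises; the multiplicative gain comes automatically from the independence of the coupled pairs, which is precisely why the coupling approach beats iterating the subadditivity of $\|\cdot\|_{\TV}$ under tensor products.
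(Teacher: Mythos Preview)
Your coupling argument is correct and yields exactly the stated bound. The paper's proof, however, takes a slightly more direct analytic route: it uses the identity $\|P-Q\|_{\TV}=1-\int(p\wedge q)\,d\nu$ together with the pointwise inequality $\prod_i p(x_i)\wedge\prod_i q(x_i)\geq\prod_i\bigl(p(x_i)\wedge q(x_i)\bigr)$ for nonnegative reals, integrates, and reads off the result in two lines. The two arguments are morally the same---your maximal coupling is precisely the probabilistic incarnation of the $p\wedge q$ decomposition---but the paper avoids invoking the existence of a maximal coupling and the coupling inequality, so it is marginally more self-contained. Your approach has the advantage of making the probabilistic intuition explicit and extends immediately to non-dominated settings once maximal couplings are granted; the paper's approach is shorter given the dominated hypothesis already in the statement.
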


\begin{proof}
For probability measures $\widetilde P, \widetilde Q$ on the same measurable space, we have $\|\widetilde P-\widetilde Q\|_{\TV} = 1 - \int d\widetilde P \wedge \widetilde Q.$ If $p,q$ denote the densities of $P$, $Q$ with respect to some dominating measure $\nu$,
\begin{align*}
	\big \| P^{\otimes n} - Q^{\otimes n} \big\|_{\TV}
	= 1-\int \prod_{i=1}^n p (x_i) \wedge  \prod_{i=1}^n q (x_i) d\nu(x_i)
	\leq 1-\prod_{i=1}^n \int  p (x_i) \wedge  q (x_i) d\nu(x_i)
\end{align*}
and the right hand side can be rewritten as $1-(1 - \|P-Q\|_{\TV})^n.$
\end{proof}

\begin{lem}
\label{lem.bds_of_info_distances}
For a function $b$ and $\sigma>0,$ denote by $Q_{b, \sigma}$ the distribution of the path $(Y_t)_{t\in [0,1]}$ with $dY_t= b(t)dt+ \sigma dW_t,$ where $W$ is a Brownian motion. If $\Phi$ denotes the distribution function of the standard normal distribution, then$$\| Q_{b_1, \sigma}- Q_{b_2, \sigma}\|_{\TV} = 1-2\Phi(-\tfrac 1{2\sigma} \|b_1 -b_2\|_2),$$ $$H^2(Q_{b_1, \sigma}, Q_{b_2, \sigma}) = 2-2\exp(-\tfrac 1{8\sigma^{2}} \|b_1 -b_2\|_2^2).$$
\end{lem}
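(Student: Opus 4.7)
The plan is to reduce both identities to computations involving the explicit Radon--Nikodym derivative between $Q_{b_1,\sigma}$ and $Q_{b_2,\sigma}$, which is available via Girsanov's theorem (or, equivalently, the Cameron--Martin formula applied with respect to the law of $\sigma W$). Writing
$$L := \frac{dQ_{b_1,\sigma}}{dQ_{b_2,\sigma}}(Y) = \exp\!\left(\frac{1}{\sigma^2}\int_0^1 (b_1-b_2)(t)\,dY_t - \frac{1}{2\sigma^2}\int_0^1\big(b_1^2(t)-b_2^2(t)\big)\,dt\right),$$
I would substitute $dY_t = b_2(t)\,dt + \sigma\,dW_t$ (valid under $Q_{b_2,\sigma}$) and use the identity $(b_1-b_2)b_2 - \tfrac12(b_1^2-b_2^2) = -\tfrac12(b_1-b_2)^2$ to obtain
$$\log L = -\frac{\|b_1-b_2\|_2^2}{2\sigma^2} + \frac{1}{\sigma}\int_0^1 (b_1-b_2)(t)\,dW_t.$$
The Wiener integral is a centred Gaussian with variance $\|b_1-b_2\|_2^2$, so with $\Delta := \|b_1-b_2\|_2/\sigma$ we get $\log L \sim N(-\Delta^2/2,\Delta^2)$ under $Q_{b_2,\sigma}$, and, by symmetry of the roles of $b_1$ and $b_2$, $\log L \sim N(\Delta^2/2,\Delta^2)$ under $Q_{b_1,\sigma}$.

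For the total variation identity, I would use the standard representation $\|P-Q\|_{\TV} = P(L \geq 1) - Q(L \geq 1)$. The two Gaussian distributions computed above immediately give $\Phi(\Delta/2)-\Phi(-\Delta/2) = 1 - 2\Phi(-\Delta/2)$, which is the first claimed formula with $\Delta/2 = \tfrac{1}{2\sigma}\|b_1-b_2\|_2$.

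For the Hellinger identity, I would write $H^2(Q_{b_1,\sigma},Q_{b_2,\sigma}) = 2 - 2\,\bbE_{Q_{b_2,\sigma}}[\sqrt{L}]$ and note that $\sqrt{L}=\exp(\tfrac12\log L)$ is the exponential of a Gaussian. The moment generating function of $N(-\Delta^2/2,\Delta^2)$ evaluated at $1/2$ yields $\exp(-\Delta^2/4 + \Delta^2/8) = \exp(-\Delta^2/8)$, giving the second claimed formula.

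The only non-routine step is the Girsanov computation together with the algebraic simplification; once $\log L$ is shown to be Gaussian with the stated parameters, both identities follow from elementary one-dimensional facts about normal distributions (the TV between two normals with equal variance, and the MGF of a normal). I do not foresee any technical obstruction, since no integrability or regularity issues arise beyond $b_1,b_2\in L^2([0,1])$, which is implicit in the statement.
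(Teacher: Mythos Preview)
Your argument is correct and follows essentially the same route as the paper: Girsanov's formula to identify the likelihood ratio as the exponential of a Gaussian, then the standard representations $\|P-Q\|_{\TV}=P(L\geq 1)-Q(L\geq 1)$ and $H^2(P,Q)=2-2\E_Q[\sqrt L]$ to reduce to one-dimensional normal computations. The only cosmetic difference is that the paper writes both densities with respect to $Q_{0,\sigma}$ rather than directly computing $dQ_{b_1,\sigma}/dQ_{b_2,\sigma}$.
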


\begin{proof}
This follows from Girsanov's formula $dQ_{b, \sigma}/dQ_{0, \sigma} = \exp(\sigma^{-1}\int_0^1 b(t) dW_t - \tfrac 12 \sigma^{-2}\| b\|_2^2)$ together with $\|P-Q\|_{\TV}= 1-P(\tfrac{dQ}{dP}>1)-Q(\tfrac{dP}{dQ}\geq1)$ and $H^2(P,Q)=2-2\int(dPdQ)^{1/2}.$
\end{proof}

\begin{proof}[Proof of Proposition \ref{prop.parametric_AE}]
We verify the conditions of Theorem 1.2 of Nussbaum \cite{nussbaum1996}, which is a specialized version of the more general Theorem 4.3 of Le Cam \cite{LeCam1985}. We first begin with the regularity conditions stated in Section 10 of \cite{nussbaum1996}. Since $g'(x) =960x(1/2-x)(1-4x)$, for any $\theta\in(0,1/2)$ the Fisher information equals
\begin{align*}
I(\theta) = \int_{\theta}^{\theta+1/2} \frac{g'(x-\theta)^2}{g(x-\theta)} dx = \int_0^1 \frac{g'(x)^2}{g(x)} dx =960\int_0^{1/2} (1-4x)^2dx = 160,
\end{align*}
from which we observe that $I(\theta)$ is both constant and finite for all $\theta\in(0,1/2)$. For $\dot{\ell}_\theta=f_\theta^{-1} \tfrac{\partial}{\partial\theta}f_\theta$, one can show explicitly that for $\theta,\theta+h\in(0,1/2)$, 
\begin{align*}
\int_0^1 \left[\sqrt{f_{\theta+h}}-\sqrt{f_\theta} - \tfrac{1}{2} h \dot{\ell}_\theta\sqrt{f_\theta} \right]^2 = 960h^4,
\end{align*}
so that the family $(f_\theta: \theta\in K)$ is differentiable in quadratic mean uniformly on compact sets $K \subset (0,1/2)$ in the sense of p. 578 of Le Cam \cite{LeCam1986}. Together, these verify the regularity conditions of Proposition 1.2 of \cite{nussbaum1996}, see Section 10 of \cite{nussbaum1996} or Proposition 1, Chapter 17.3 of \cite{LeCam1986}.

We now verify the crucial condition that the family $\Theta(K)$ has finite Hellinger metric dimension. Using the explicit form of $g$ and directly integrating, one can show after some calculations that for any $\theta \in(0,1/2)$,
\begin{align*}
\int_0^1 \sqrt{g(x-\theta)g(x)} dx = \int_\theta^{1/2} \sqrt{g(x-\theta)g(x)} dx = 1-20\theta^2(1-2\theta+8\theta^3/5),
\end{align*}
so that the Hellinger distance equals
\begin{align*}
H^2(P_{f_\theta},P_g) = 2 - 2\int_0^1 \sqrt{g(x-\theta)g(x)} dx = 40\theta^2(1-2\theta+8\theta^3/5).
\end{align*}
By a change of variable, $H(P_{f_\theta},P_{f_{\theta'}}) = H(P_{f_{|\theta-\theta'|}},P_g)$ for any $\theta,\theta'\in(0,1/2)$, so that $H^2(P_{f_\theta},P_{f_{\theta'}}) =40|\theta-\theta'|^2 + O(|\theta-\theta'|^3).$ In particular, there exists a constant $\tilde{c}>1$ such that $\tilde{c}^{-1} |\theta-\theta'|^2 \leq H^2(P_{f_\theta},P_{f_{\theta'}}) \leq \tilde{c}|\theta-\theta'|^2$ for all $\theta,\theta'\in(0,1/2)$. 

To establish finite Hellinger metric dimensionality, we must show that for any $\varepsilon>0$, $\{ f_{\theta'}:H(f_{\theta'},f_{\theta}) \leq \varepsilon\}$ can be covered in Hellinger distance by a finite number of $\varepsilon/2$ balls, independently of $\varepsilon$. By the above results, it suffices to show that $\{ \theta': |\theta'-\theta| \leq \tilde{c}^{1/2} \varepsilon \}$ can be covered by $N$ balls $\{ \theta': |\theta'-\theta_i| \leq \tilde{c}^{-1/2} \varepsilon/2 \}$, $i=1,...,N$, for some $N$ independent of $\varepsilon$. This is simply covering a compact interval in $\R$ and can be done with $N = 2\tilde{c}$ such $\varepsilon/2$ balls, thereby giving the required finite metric dimension $D \leq \log_2(2\tilde{c})$.
\end{proof}

\textbf{Acknowledgements:} The authors would like to thank the referees for their helpful comments and a referee from another paper for suggesting the example in Proposition \ref{prop.parametric_AE}. Most of this work was done while Kolyan Ray was a postdoctoral researcher at Leiden University.

\bibliographystyle{acm}    
\bibliography{bibhd}           

\end{document}